\renewcommand\thesection{\arabic{section}}
\def\dashint{\,\ThisStyle{\ensurestackMath{\stackinset{c}{.2\LMpt}{c}{.5\LMpt}{\SavedStyle-}{\SavedStyle\phantom{\int}}}\setbox0=\hbox{$\SavedStyle\int\,$}\kern-\wd0}\int}
\newcounter{teller}
\newcounter{tellerr}
\newenvironment{tabeleq}{\begin{list}%
{\rm  (\roman{tellerr})\hfill}{\usecounter{tellerr} \leftmargin=1.1cm
	\labelwidth=1.1cm \labelsep=0cm \parsep=0cm}
}{\end{list}}
\newcounter{tellerrr}
\def\eqnarray{\stepcounter{equation}\let\@currentlabel=\theequation
\global\@eqnswtrue
\tabskip\@centering\let\\=\@eqncr
$$\halign to \displaywidth\bgroup\hfil\global\@eqcnt\z@
$\displaystyle\tabskip\z@{##}$&\global\@eqcnt\@ne
\hfil$\displaystyle{{}##{}}$\hfil
&\global\@eqcnt\tw@ $\displaystyle{##}$\hfil
\tabskip\@centering&\llap{##}\tabskip\z@\cr}
\def\endeqnarray{\@@eqncr\egroup
\global\advance\c@equation\m@ne$$\global\@ignoretrue}
\def\@yeqncr{\@ifnextchar [{\@xeqncr}{\@xeqncr[5pt]}}
\newtheorem{theorem}{Theorem}[section]
\newtheorem{definition}[theorem]{Definition}
\newtheorem{lemma}[theorem]{Lemma}
\newtheorem{proposition}[theorem]{Proposition}
\newenvironment{proof}[1][Proof]{\textbf{#1.} }{\hfill\rule{0.5em}{0.5em}}
{\catcode`\@=11\global\let\AddToReset=\@addtoreset
\AddToReset{equation}{section}
\renewcommand{\theequation}{\thesection.\arabic{equation}}
\AddToReset{theorem}{section}


\newcommand{\tb}{\displaystyle\dashint}

\newcommand{\tp}{\displaystyle\int}

\newcommand{\su}{\mathop{\sup} \limits}

\newcommand{\ngang}{\overline}
\newcommand{\un}{\overline{u}}
\newcommand{\xn}{\ngang{x}}
\newcommand{\nga}{\widetilde}

\newcommand{\hty}{\rightharpoonup}
\newcommand\htu[1]{\xrightarrow{#1}}
\newcommand\lsu[1]{\mathop {\lim \sup }\limits_{#1}}
\newcommand{\hoi}{\longrightarrow}
\newcommand\li[1]{\mathop {\lim}\limits_{#1}}


\newcommand{\N}{\mathbb N}

\newcommand{\R}{\mathbb R}

\newcommand{\Cc}{C^\vc_{\operatorname{c}}}
\newcommand{\ax}{\mapsto} 
\newcommand{\va}{\varphi}
\newcommand{\vae}{\varepsilon}

\newcommand{\vc}{\infty}

\newcommand{\q}{\omega}

\newcommand{\1}{\mathbf{1}}

\newcommand{\di}{\operatorname{div}}

\newcommand\norm[1]{\left\lVert#1\right\rVert}

\title{{\bf Weighted gradient estimates for the class of 
\\singular $p$-Laplace system}}
\author{Tan Duc Do$^{1}$, Le Xuan Truong$^2$, Nguyen Ngoc Trong$^{3,\star}$ \\ {\small $^1$University of Economics Ho Chi Minh City \\   Email: \texttt{tanducdo.math@gmail.com}\medskip\\
{\small $^2$University of Economics Ho Chi Minh City \\ Email: \texttt{lxuantruong@gmail.com}} \medskip\\ {\small $^3$ University of Economics Ho Chi Minh City \\  $^{\star}$ Corresponding author\\ Email: \texttt{trongnn37@gmail.com}}}}
\date{\today}

\begin{document}

\maketitle
\begin{abstract}

Let $n \in \{2, 3, 4, \ldots\}$, $N \in \{1, 2, 3, \ldots\}$ and $p \in \big(1, 2-\frac{1}{n}\big]$.
Let $\beta \in (1,\infty)$ be such that
\[
\frac{np}{n-p}<\beta'<\frac{n}{n(2-p)-1}
\]
and $f \in L^{\beta}(\R^n;\R^N)$. 
Consider the $p$-Laplace system
\[
-\Delta_p u=-\di\big(|Du|^{p-2}Du\big)=f \quad \mathrm{in} \quad \R^n.
\]
We obtain a weighted gradient estimate for distributional solutions of this system.

\medskip

\noindent 

\medskip

\noindent {\bf Keywords:}  Nonlinear elliptic systems, Gradient regularity.

\noindent {\bf 2010 Mathematics Subject Classification:}   primary: 35J60, 35J61, 35J62; secondary: 35J75, 42B37.

\end{abstract}   

\tableofcontents
\section{Introduction}

Calderon-Zygmund theory is undoubtedly classical to linear partial differential equations. 
In the last few years, its extension to non-linear settings has become an active area of research. 
For a comprehensive survey on this account, cf.\ \cite{Min2} and also the references therein.
Our paper continues this trend with a gradient estimate for the solutions of a $p$-Laplace system.

Specifically, let $n \in \{2, 3, 4, \ldots\}$, $N \in \{1, 2, 3, \ldots\}$ and $p \in \big(1, 2-\frac{1}{n}\big]$.
Consider the $p$-Laplace system
\begin{equation}\label{5hh070120148}
-\Delta_p u=-\di\big(|Du|^{p-2}Du\big)=f \quad \mathrm{in} \quad \R^n,
\end{equation}
where $f: \R^n \longrightarrow \R^N$ belongs to some appropriate Lebesgue space.

Our aim is to derive a general Muckenhoupt-Wheeden-type gradient estimate for \eqref{5hh070120148}. 
This result inherits the spirit of \cite{KM}, \cite{NP}, \cite{NP2} and \cite{NP3}.
Specifically, in \cite{NP}, \cite{NP2} the authors obtained such estimates when $N=1$ and $1<p\leq 2-\frac{1}{n}$.
If in addition $\frac{3n-2}{2n-1}<p\leq 2-\frac{1}{n}$, pointwise gradient estimates with measure data are also available (cf.\ \cite{NP3}).
In a system setting (i.e.\ $N \ge 1$) with measure data, pointwise grandient bounds via Riesz potential and Wolff potential for $p>2-\frac{1}{n}$ were obtained in \cite{KM}.
Regarding the method of proof, we follow the general frameworks presented in these papers. 
Our main contribution involves the reconstructions of a comparison estimate and a good-$\lambda$-type bound peculiar to the setting in this paper.

To state our main result, we need some definitions.

\begin{definition}
A function $u:\R^n \to \R^N$ is a distributional (or weak) solution to \eqref{5hh070120148} if
\[
\int_{\R^n} |Du|^{p-2}Du:D\va \, dx=\int_{\R^n} f \va \, dx 
\]
for all $ \va \in \Cc(\R^n,\R^N)$.
\end{definition}

Here $Du$, which is a counterpart of $\nabla u$ in the equation setting, is understood in the sense of tensors.
See Section \ref{tensor} for further details.

Next recall the notion of Muckenhoupt weights.

\begin{definition}
A positive function $\q \in L^1_{\text{loc}}(\mathbb{R}^{n})$ is said to be an $\mathbf{A}_{\infty}$-weight if there exist constants $C > 0$ and $\nu > 0$ such that
$$
\q(E)\le C \left(\frac{|E|}{|B|}\right)^\nu \q(B),
$$
for all balls $B \subset \R^n$ and all measurable subset $E$ of $B$. The pair $(C,\nu) $ is called the $\mathbf{A}_\infty$-constants of $\q$ and is denoted by $[\q]_{\mathbf{A}_\infty}$.  
\end{definition}

In what follows, we will also make use of the maximal function defined by
\[
{\bf M}_\beta(f)(x)
=\sup_{\rho>0} \rho^\beta \, \tb_{B_\rho(x)}|f(y)|dy
\]
for all $x\in\mathbb{R}^{n}$, $f \in L^1_{\mathrm{loc}}(\R^n)$ and $\beta \in [0,n]$, where  
$$
\tb_{B_\rho(x)}|f(y)| \, dy := \frac{1}{|B_\rho(x)|} \int_{B_\rho(x)} f(y) \, dy.
$$
When $\beta = 0$, the Hardy-Littlewood maximal function ${\bf M} = {\bf M}_0$ is recovered.

Our main result is as follows.

\begin{theorem} \label{101120143-p} 
Let $n \in \{2, 3, 4, \ldots\}$, $N \in \{1, 2, 3, \ldots\}$ and $p \in \big(1, 2-\frac{1}{n}\big]$.
Let $\beta \in (1,\infty)$ be such that
\[
\frac{np}{n-p}<\beta'<\frac{n}{n(2-p)-1}
\]
and $f \in L^{\beta}(\R^n;\R^N)$. 
Let $\Phi: [0,\infty)\rightarrow [0,\infty)$ be a strictly increasing function that satisfies
\[
\Phi(0)=0
\quad \mbox{and} \quad
\lim_{t\rightarrow\infty}\Phi(t)=\infty.
\]
Furthermore assume that there exists a $c > 1$ such that
\[
\Phi(2t)\leq c\, \Phi(t)
\]
for all $t\geq 0$.
Then for all $\q\in \mathbf{A}_{\infty}$ there exist a $C > 0$ and a $\delta \in (0,1)$, both depending on $n$, $p$, $\Phi$ and $[\q]_{\mathbf{A}_{\infty}}$ only, such that                             
\[
\int_{\R^n}\Phi(|D u|) \, \q \, dx \leq C \int_{\R^n} \Phi\left[\Big(\mathbf{M}_\beta\big(|f|^{\beta}\big)\Big)^{\frac{1}{(p-1)\beta}}\right] \q \, dx
\]
for all distributional solution $u$ of \eqref{5hh070120148}.
\end{theorem}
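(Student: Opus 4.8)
The plan is to follow the Calderón--Zygmund machinery that is by now standard for nonlinear gradient estimates (as in \cite{KM}, \cite{NP}, \cite{NP2}, \cite{NP3}): reduce the weighted integral bound to a good-$\lambda$ inequality for the level sets of the (restricted, fractional) maximal function of $|Du|$, obtain that inequality from a comparison estimate against the homogeneous $p$-Laplace system, and then sum over dyadic levels. Concretely, I would first fix a ball $B_{2R}$ and compare the solution $u$ on $B_{2R}$ with the unique solution $w$ of the homogeneous system $-\Delta_p w = 0$ in $B_{2R}$ with $w = u$ on $\partial B_{2R}$. Testing the difference of the two equations with $u - w$ and using the monotonicity of the $p$-Laplace operator (in the singular range $1<p\le 2-\tfrac1n$ one uses the vector inequality $|V(\xi)-V(\eta)|^2 \ls (|\xi|^{p-2}\xi - |\eta|^{p-2}\eta):(\xi-\eta)$ for $V(\xi)=|\xi|^{(p-2)/2}\xi$, together with the reverse Hölder-type manipulations needed because $p<2$), I would bound a suitable power of $\tb_{B_{2R}} |Du - Dw|$ by a power of $R^{\beta/\beta'}\,\big(\tb_{B_{2R}}|f|^{\beta}\big)^{1/\beta}$ divided by an appropriate normalization --- this is where the hypothesis $\tfrac{np}{n-p}<\beta'<\tfrac{n}{n(2-p)-1}$ enters, the left inequality guaranteeing the right Sobolev embedding to absorb the data term and the right inequality guaranteeing subcriticality so that the comparison exponent is admissible. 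This comparison estimate is the first key step.

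The second ingredient is the interior Lipschitz (or at least $C^{1,\alpha}$) regularity of $p$-harmonic maps $w$: $\|Dw\|_{L^\infty(B_R)} \ls \tb_{B_{2R}} |Dw|$, which holds in the system case for the homogeneous $p$-Laplacian and allows one to say that where the maximal function of $|Du|$ is large, the maximal function of $|Du-Dw|$ (equivalently, the data term $\mathbf{M}_\beta(|f|^\beta)^{1/((p-1)\beta)}$) must be comparably large. Combining the comparison estimate with this $L^\infty$ bound and a Vitali-type covering argument on the level set $\{\mathbf{M}(|Du|) > \lambda\}$ (intersected with a fixed large ball, then letting the ball exhaust $\R^n$), one arrives at the good-$\lambda$ inequality
\[
|\{\mathbf{M}(|Du|^{q}) > A\lambda\} \cap \{\mathbf{M}_\beta(|f|^\beta)^{q/((p-1)\beta)} \le \epsilon\lambda\}| \le C\epsilon^{\sigma} |\{\mathbf{M}(|Du|^{q}) > \lambda\}|
\]
for a suitable small exponent $q$ (needed because $p<2$ forces us to work below the natural exponent) and some $\sigma>0$. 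The passage from the Lebesgue measure version to the $\q$-weighted version is precisely what the $\mathbf{A}_\infty$ condition is for: the density estimate $\q(E) \le C(|E|/|B|)^\nu \q(B)$ upgrades the good-$\lambda$ inequality to one with $\q$ in place of $|\cdot|$, with $\epsilon^{\sigma\nu}$ on the right.

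Finally, multiply the weighted good-$\lambda$ inequality by $\Phi'$-type weights and integrate in $\lambda$ over $(0,\infty)$; the doubling condition $\Phi(2t)\le c\,\Phi(t)$ lets one pass freely between $\Phi(A\lambda)$ and $\Phi(\lambda)$, and choosing $\epsilon$ small (hence $\delta := C\epsilon^{\sigma\nu} < 1$) makes the $\int \Phi(\mathbf{M}(|Du|^q)^{1/q})\,\q$ term on the right absorbable into the left, yielding
\[
\int_{\R^n} \Phi(|Du|)\,\q\,dx \ls \int_{\R^n} \Phi\!\left[\big(\mathbf{M}_\beta(|f|^\beta)\big)^{\frac{1}{(p-1)\beta}}\right]\q\,dx,
\]
after using $|Du| \le \mathbf{M}(|Du|^q)^{1/q}$ a.e.\ and the boundedness of the maximal function at the level of the weighted $\Phi$-integral (again via $\mathbf{A}_\infty$). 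The main obstacle, and the part that is genuinely new here relative to the scalar case of \cite{NP}--\cite{NP3}, is establishing the comparison estimate in the full singular system range $1<p\le 2-\tfrac1n$ with an exponent compatible with the stated window for $\beta'$: one has to be careful with the sub-natural-growth estimates (reverse Hölder inequalities, the $V$-function algebra, and the Sobolev embedding $W^{1,p}\hookrightarrow L^{np/(n-p)}$) so that the data term $f\in L^\beta$ is actually controlled, and to track that the resulting decay exponent $\sigma>0$ survives multiplication by $\nu$ from $[\q]_{\mathbf{A}_\infty}$. I also expect the interior Lipschitz bound for $p$-harmonic \emph{systems} in the singular range (rather than equations) to require some care, though this is available in the literature.
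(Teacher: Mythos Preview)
Your overall architecture --- comparison estimate, interior Lipschitz bound for $p$-harmonic maps, weighted good-$\lambda$ inequality via $\mathbf{A}_\infty$, then integration in $\lambda$ using the doubling of $\Phi$ --- matches the paper exactly, and the final two steps would go through essentially as you describe. The gap is in the comparison estimate, and it is not a matter of care with exponents but a structural obstruction.

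You propose to take $w$ as the $p$-harmonic map with $w=u$ on $\partial B_{2R}$ and test the difference of the equations with $u-w$. But the hypothesis is $\beta' > \frac{np}{n-p}=p^*$, so the Sobolev embedding $W^{1,p}_0(B)\hookrightarrow L^{\beta'}(B)$ \emph{fails}; in particular $u-w\in W^{1,p}_0(B)$ need not lie in $L^{\beta'}(B)$, the pairing $\int f\,(u-w)$ is not controlled by $\|f\|_{L^\beta}$, and $u-w$ is not even an admissible test function in the weak formulation (which requires $\varphi\in W^{1,p}_0\cap L^{\beta'}$). Your reading of the left inequality on $\beta'$ as ``guaranteeing the right Sobolev embedding'' is therefore backwards: that inequality is precisely what rules the direct energy comparison out. (In the paper the two bounds on $\beta'$ are what force the auxiliary exponent $q_0=\frac{\beta'(p-1)n}{\beta'(n-1)-n}$ to satisfy $1<q_0<p$.)

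The paper circumvents this by abandoning the Dirichlet comparison entirely. The $p$-harmonic map $v$ is produced by a contradiction/compactness argument in the spirit of \cite{KM}: one tests the equation with \emph{truncated} functions $\phi^p\,T_t(\bar u)$ (bounded, hence automatically in $L^{\beta'}$), iterates the resulting reverse-H\"older/Sobolev inequalities to get uniform $W^{1,q}$ bounds for $q<q_0$, and then passes to a limit along a sequence with vanishing data to extract a $p$-harmonic $v$ on $\tfrac12 B$. This yields a comparison of the form
\[
\Big(\tb_{\frac12 B}|Du-Dv|^q\Big)^{1/q}\le \varepsilon\Big(\tb_B|Du|^q\Big)^{1/q}+\frac{\varepsilon}{\delta^{1/(p-1)}}\Big[r\Big(\tb_B|f|^\beta\Big)^{1/\beta}\Big]^{1/(p-1)},
\]
with an \emph{arbitrary} small $\varepsilon$ multiplying both terms --- something a direct energy estimate cannot deliver. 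That extra $\varepsilon$ in front of the gradient term is exactly what makes the good-$\lambda$ step close. So the missing idea is: replace the Dirichlet comparison by the truncation-test-function a priori bounds plus compactness; once you have Proposition~\ref{inter}, the rest of your outline is correct.
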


Note that in our setting all functions are vector fields. 
For short we will write, for instance, $\Cc(\R^n)$ in place of $\Cc(\R^n,\R^N)$ hereafter. 
When scalar-valued functions are in use, we will explicitly write $\Cc(\R^n,\R)$.
This convention applies to all function spaces in the whole paper.

When $n=1$ it has been known that the distributional solution $u$ is locally $C^{1,\alpha}$ for some exponent $\alpha = \alpha(n,N,p) > 0$, whose result is due to \cite{Uhl}.
Hence we only consider $n \ge 2$ in this project.
We also remark that the function $\Phi$ in the above theorem is quite general. In particular, we do not require $\Phi$ to be convex or to satisfy the so-called $\nabla_2$ condition: $\Phi(t)\geq \frac{1}{2a}\Phi(at)$ for some $a>1$ and for all $t\geq 0$. As such one can take, for examples, $\Phi(t)=t^\alpha$ or $\Phi(t)=[\log(1+t)]^{\alpha}$ for any $\alpha>0$.

The outline of the paper is as follows.
Section \ref{tensor} collects definitions and basic facts about tensors and $p$-harmonic maps. 
In Sections \ref{compare} and \ref{sec-3} we derive a comparison estimate and a good-$\lambda$-type bound respectively.
Lastly Theorem \ref{101120143-p} is proved in Section \ref{main proof}.

{\bf Notations.} \quad
Throughout the paper the following set of notation is used without mentioning.
Set $\N = \{0, 1, 2, 3, \ldots\}$ and $\N^* = \{1, 2, 3, \ldots\}$.
For all $a, b \in \R$, $a \wedge b = \min\{a,b\}$ and $a \vee b = \max\{a,b\}$.
For all ball $B \subset \R^d$ we write $w(B) := \int_B w$.
The constants $C$ and $c$ are always assumed to be positive and independent of the main parameters whose values change from line to line.
Given a ball $B = B_r(x)$, we let $t B = B_{tr}(x)$ for all $t > 0$.
If $p \in [1,\infty)$, then the conjugate index of $p$ is denoted by $p'$.

{\bf Throughout assumptions.} \quad 
In the entire paper, we always assume that $n \in \{2, 3, 4, \ldots\}$, $N \in \{1, 2, 3, \ldots\}$ and $p \in \big(1, 2-\frac{1}{n}\big]$ without explicitly stated.

\section{Tensors and $p$-harmonic maps} \label{tensor}

This section briefly summarizes definitions and basic facts regarding tensors and $p$-harmonic maps.
Further details are available in \cite[Sections 2 and 3]{KM}.
These will be used frequently in subsequent sections without mentioning.

Let $\{e_j\}_{j=1}^n$ and $\{e^{\alpha}\}_{\alpha=1}^N$ be the canonical bases of $\R^n$ and $\R^N$ respectively.
Let $\zeta$ and $\xi$ be second-order tensors of size $(N,n)$, that is,
\[
\zeta = \zeta_j^\alpha \, e^\alpha \otimes e_j
\quad \mbox{and} \quad
\xi = \xi_j^\alpha \, e^\alpha \otimes e_j
\]
in which repeated indices are summed.
Note that the linear space of all second-order tensors is isomorphic to $\R^{N \times n}$.

The Frobenius product of $\zeta$ and $\xi$ is given by
\[
\zeta : \xi = \zeta_j^\alpha \, \xi_j^\alpha,
\]
from which we also obtain the Frobenius norm of $\zeta$ as
$
|\zeta|^2 = \zeta : \zeta.
$
The divergence of $\zeta$ is defined by
\[
\mathrm{div} \, \zeta = (\partial_j \zeta_j^\alpha) \, e^\alpha.
\]
Also the gradient of a first-order tensor $u = u^\alpha \, e^\alpha$ is the second-order tensor 
\[
Du = (\partial_j u^\alpha) \, e^\alpha \otimes e_j.
\]

Next consider the tensor field
\[
A_q(z) := |z|^{q-2} \, z = |z|^{q-2} \, z_j^\alpha \, e^\alpha \otimes e_j
\]
defined on the linear space of all second-order tensors, where $q \in (1,\infty)$.
The differential of $A_q$ is defined as a fourth-order tensor 
\[
\partial A_q(z) 
= |z|^{q-2} \, \left( \delta_{\alpha\beta} \, \delta_{ij} 
+ (q-2) \, \frac{z_i^\alpha \, z_j^\beta}{|z|^2} \right) \, 
(e^\alpha \otimes e_i) \otimes (e^\beta \otimes e_j).
\]
Here $\delta_{\alpha\beta}$ is the Kronecker's delta.
This leads to
\[
\partial A_q(z) : \xi 
= |z|^{p-2} \, \left( \xi + (q-2) \, \frac{(z : \xi) \, z}{|z|^2} \right)
\]
and
\[
\big( \partial A_q(z) : \xi \big) : \xi 
= |z|^{q-2} \, \left( |\xi|^2 + (q-2) \, \frac{(z : \xi)^2}{|z|^2} \right).
\]

Regarding second-order tensors, the following inequality is well-known (cf.\ \cite[(4.51)]{KM}).

\begin{lemma} \label{tensor ineq}
Let $q \in (1,\infty)$.
There exists a $c=c(n;N;p) \le 1$ such that
\[
\Big(|z_2|^{q-1}z_2-|z_1|^{q-1}z_1 \Big):(z_2-z_1)\geq c \, \Big(|z_2|^2+|z_1|^2\Big)^{(q-2)/2} \, |z_2-z_1|^2
\]
for all second-order tensors $z_1$ and $z_2$.
\end{lemma}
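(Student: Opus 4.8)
The argument is purely algebraic in $\R^{N \times n}$ equipped with the Frobenius inner product, so the vectorial nature of the system plays no special role; the plan is to integrate the differential of $A_q(z) = |z|^{q-2}z$ along the segment joining $z_1$ to $z_2$. Set $\xi := z_2 - z_1$ and, for $t \in [0,1]$, $z_t := z_1 + t\xi$, so that the left-hand side of the asserted inequality is $(A_q(z_2) - A_q(z_1)):\xi$. If $\xi = 0$ the claim is trivial, so assume $\xi \neq 0$. Since $A_q$ is $C^1$ away from the origin and the segment $\{z_t\}_{t \in [0,1]}$ meets the origin for at most one value $t_0$ (with $|z_t| = |t - t_0|\,|\xi|$ near it), the scalar map $t \mapsto A_q(z_t):\xi$ is continuous on $[0,1]$, of class $C^1$ off at most one point $t_0$ with derivative $(\partial A_q(z_t):\xi):\xi$, and this derivative is dominated by $C_q\,|z_t|^{q-2}|\xi|^2$ with $t \mapsto |z_t|^{q-2} \in L^1(0,1)$ precisely because $q > 1$ (i.e.\ $q - 2 > -1$). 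A routine limiting argument across $t_0$ then yields
\[
(A_q(z_2) - A_q(z_1)):\xi = \int_0^1 \big(\partial A_q(z_t):\xi\big):\xi \, dt = \int_0^1 |z_t|^{q-2}\Big(|\xi|^2 + (q-2)\,\tfrac{(z_t:\xi)^2}{|z_t|^2}\Big)\,dt,
\]
using the formula for $(\partial A_q(z):\xi):\xi$ recorded in Section~\ref{tensor}.

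I would then bound the integrand from below by Cauchy--Schwarz, $0 \le (z_t:\xi)^2 \le |z_t|^2|\xi|^2$, distinguishing two regimes: when $q \ge 2$ the correction term is nonnegative and the integrand is $\ge |z_t|^{q-2}|\xi|^2$; when $1 < q < 2$ it is $\ge (q-1)\,|z_t|^{q-2}|\xi|^2$. In either case the matter reduces to the scalar estimate $\int_0^1 |z_t|^{q-2}\,dt \gtrsim (|z_1|^2 + |z_2|^2)^{(q-2)/2}$, which I would prove after observing that, assuming without loss of generality $|z_1| \le |z_2|$, one has $(|z_1|^2 + |z_2|^2)^{(q-2)/2}$ comparable to $|z_2|^{q-2}$ with a $q$-dependent constant. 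For $1 < q < 2$ the bound is immediate since $|z_t| \le (1-t)|z_1| + t|z_2| \le |z_2|$ and $q - 2 < 0$ force $|z_t|^{q-2} \ge |z_2|^{q-2}$ for every $t$; for $q \ge 2$ I would instead use $|z_t| \ge t|z_2| - (1-t)|z_1| \ge (2t-1)|z_2|$ on $[\tfrac{1}{2},1]$ to get $\int_0^1 |z_t|^{q-2}\,dt \ge |z_2|^{q-2}\int_{1/2}^1 (2t-1)^{q-2}\,dt = \tfrac{1}{2(q-1)}\,|z_2|^{q-2}$.

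Assembling the two reductions gives the inequality with an explicit $c = c(q)$ --- one may take $c = q-1$ for $1 < q < 2$ and $c = 2^{-(q-2)/2}/(2(q-1))$ for $q \ge 2$ --- and a direct check shows $c \le 1$ in both regimes; since in this paper $q$ ranges over finitely many exponents determined by $n$, $N$, $p$, this reads as $c = c(n;N;p)$. I would finish by verifying the degenerate configurations directly: if one of the $z_i$ vanishes, both sides collapse to a multiple of $|z_2|^q$, consistent with $c \le 1$. The one genuinely delicate point is the justification of the fundamental-theorem-of-calculus identity when $1 < q < 2$, where $A_q$ is merely H\"older continuous at the origin; it hinges exactly on the integrability exponent $q - 2 > -1$, and an alternative is to prove the inequality first on the dense open set of pairs whose connecting segment avoids $0$ and then pass to the limit, both sides being continuous in $(z_1, z_2)$.
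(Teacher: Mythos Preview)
Your argument is correct and is the standard one: integrate $\partial A_q$ along the segment, use Cauchy--Schwarz to bound the quadratic form from below by $(q-1 \wedge 1)\,|z_t|^{q-2}|\xi|^2$, and then compare $\int_0^1 |z_t|^{q-2}\,dt$ with $|z_2|^{q-2}$ in the two regimes. The handling of the possible singularity of $A_q$ at the origin when $1<q<2$ via the integrability $q-2>-1$ (or, alternatively, by density and continuity) is also the right way to close that gap.

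There is nothing to compare against in the paper: the lemma is stated without proof and simply attributed to \cite[(4.51)]{KM} as well known. Your write-up therefore supplies what the paper omits. Two small remarks. First, the displayed inequality in the paper carries an evident typo, $|z_i|^{q-1}z_i$ in place of $|z_i|^{q-2}z_i=A_q(z_i)$; the application in the proof of Lemma~\ref{u tilde} confirms the intended exponent is $q-2$, and you silently read it this way. Second, the constant you obtain depends only on $q$; the paper's ``$c=c(n;N;p)$'' is just because the lemma is invoked only at $q=p$, so your remark about this is accurate.
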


We end this section with the definition of a $q$-harmonic map.

\begin{definition}
Let $q \in (1,\infty)$.
A function $v \in W^{1,q}(\R^n)$ is said to be $q$-harmonic if 
\[
\int_{\R^n} |Dv|^{q-2} \, Dv : D\va \, dx = 0
\]
for all $\va \in C_c^\infty(\R^n)$.
\end{definition}

\section{A comparison estimate} \label{compare}

In this section we prove a comparison estimate between the weak solutions of \eqref{5hh070120148} and a $p$-harmonic map, which is the content of Proposition \ref{prop4.1}.

In what follows it is convenient to denote
\begin{equation}\label{4.2}
q_0=\frac{\beta' \, (p-1) \, n}{\beta' \, (n-1)-n}.
\end{equation}
Note that $q_0 \in (1,p)$.
Also set $B_\sigma = B_\sigma(0)$ for all $\sigma \in (0,1]$.

\begin{proposition} \label{prop4.1}
Let $\varepsilon>0$, $M\geq 1$ and $\beta \in (1,\infty)$ be such that $\frac{np}{n-p}<\beta'<\frac{n}{n(2-p)-1}$.
Let $1< q <q_0$ and $B = B_r(x_0)$ be a ball in $\R^n$.
Suppose $u\in W^{1,p}(B)$ satisfy
\begin{equation}\label{4.1}
\tb_{B}|u|dx \leq Mr.
\end{equation}  
Then there exists a positive constant $\delta=\delta\left(n,N,p,q,M,\varepsilon\right)\in (0,1)$ such that if
\begin{equation}\label{4.3}
\left|\tb_{B}|D u|^{p-2}D u: D \varphi \, dx\right|\leq \frac{\delta}{r}\left(\tb_{B} |\varphi(x)|^{\beta'} \, dx\right)^{1/\beta'}
\end{equation}
for all $\varphi\in W^{1,p}_0(B)\cap L^{\beta'}(B)$, then there exist a constant $c = c(n,N,p,q) > 0$ and a $p$-harmonic map $v\in W^{1,p}(\frac{1}{2}B)$ such that 
\[
\Big(\tb_{\frac{1}{2}B}\left|Du-Dv\right|^q \, dx\Big)^{1/q} \leq \varepsilon
\]
as well as
\[
\tb_{\frac{1}{2}B}|v| \, dx\leq M \, 2^n r\quad \text{and}\quad \Big(\tb_{\frac{1}{2}B}|Dv|^q \, dx\Big)^{1/q}\leq cM.
\]
\end{proposition}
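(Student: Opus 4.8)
\textbf{Overall strategy.} The plan is to use a compactness (normal-family) argument by contradiction, which is the standard mechanism for turning a quantitative smallness hypothesis like \eqref{4.3} into a quantitative closeness conclusion. Suppose the statement fails for some fixed $\varepsilon, M, q$. Then for each $k \in \N^*$ we get a ball $B_k = B_{r_k}(x_k)$ and a function $u_k \in W^{1,p}(B_k)$ satisfying \eqref{4.1} with constant $M$ and satisfying \eqref{4.3} with $\delta = 1/k$, yet for which no $p$-harmonic map $v$ on $\frac12 B_k$ achieves the three asserted bounds (in particular the $L^q$-closeness of gradients). By translating and rescaling — replacing $u_k(x)$ by $\tilde u_k(y) = r_k^{-1} u_k(x_k + r_k y)$ — I would reduce to the case $B_k = B_1(0)$ for all $k$, noting that the hypotheses \eqref{4.1} and \eqref{4.3} are invariant under this scaling (the factors of $r$ are placed precisely so that this works), and that $p$-harmonicity and all the target estimates are scale-invariant in the same way. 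So WLOG $B_k = B_1 =: B$, $\dashint_B |u_k| \, dx \le M$, and $|\dashint_B |Du_k|^{p-2} Du_k : D\varphi \, dx| \le \tfrac1k (\dashint_B |\varphi|^{\beta'} dx)^{1/\beta'}$ for all admissible $\varphi$.

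\textbf{Key steps.} First I would establish a uniform a priori bound $\|Du_k\|_{L^{q_0}(B)} + \|u_k\|_{L^{q_0}(B)} \le C(n,N,p,\beta,M)$, or at least on a slightly smaller ball. This is the technical heart and should follow from a Caccioppoli / reverse-Hölder-type estimate for \eqref{5hh070120148}-type systems in the subquadratic range $p \le 2 - 1/n$, using the monotonicity inequality in Lemma \ref{tensor ineq} together with the choice of exponent $q_0$ in \eqref{4.2} — indeed $q_0$ is defined exactly so that the Sobolev embedding $W^{1,q_0} \hookrightarrow L^{(q_0)^*}$ with $(q_0)^* = $ the relevant dual exponent closes the estimate; the hypothesis $\beta' > \frac{np}{n-p}$ is what forces $q_0 > 1$ and the hypothesis $\beta' < \frac{n}{n(2-p)-1}$ is what forces $q_0 < p$. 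Given such a bound, since $q < q_0$, the sequence $(u_k)$ is bounded in $W^{1,q_0}(\frac12 B)$ hence, passing to a subsequence, converges weakly in $W^{1,q_0}(\frac12 B)$ and strongly in $L^{q_0}(\frac12 B)$ to some limit $u_\infty$, which by the $L^1$-bound \eqref{4.1} satisfies $\dashint_{\frac12 B} |u_\infty| \le M 2^n$. Next, the smallness hypothesis with $\delta = 1/k \to 0$ passes to the limit and shows $u_\infty$ is $p$-harmonic on $\frac12 B$ (test against $\varphi \in C_c^\infty(\frac12 B) \subset W^{1,p}_0 \cap L^{\beta'}$ and let $k \to \infty$; one needs $Du_k \to Du_\infty$ strongly enough in a Lebesgue space below $q_0$ to pass the nonlinear term, which the strong $L^{q_0}$-convergence of $u_k$ upgrades via the standard monotone-operator / Minty trick applied to the almost-solution equation). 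Standard interior estimates for $p$-harmonic maps (cf.\ \cite[Section 3]{KM}) then give the gradient bound $(\dashint_{\frac12 B} |Du_\infty|^q)^{1/q} \le cM$. Finally — and this is where the contradiction is extracted — one shows $Du_k \to Du_\infty$ strongly in $L^q(\frac12 B)$: from the almost-equation for $u_k$ and $p$-harmonicity of $u_\infty$, subtract and test with (a cutoff of) $u_k - u_\infty$, use Lemma \ref{tensor ineq} to bound $\int (|Du_k|^2 + |Du_\infty|^2)^{(p-2)/2} |Du_k - Du_\infty|^2$ from above by terms that vanish (the $\delta$-term because $\delta \to 0$, the difference term because $u_k \to u_\infty$ strongly in $L^{q_0}$), and then convert this degenerate quantity into control of $\|Du_k - Du_\infty\|_{L^q}^q$ using Hölder with the uniform $L^{q_0}$-bounds and the fact $q < q_0$ — this is the usual device in the singular range where the weight $(|Du_k|^2+|Du_\infty|^2)^{(p-2)/2}$ can be large. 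Thus for $k$ large the map $v := u_\infty$ satisfies all three asserted bounds on $\frac12 B_k$, contradicting the choice of the $u_k$.

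\textbf{Main obstacle.} The hardest part will be the uniform $W^{1,q_0}$ a priori bound for the almost-solutions $u_k$ from the weak smallness hypothesis \eqref{4.3} alone (rather than from a genuine equation), in the singular regime $p \le 2 - \frac1n$ where the natural energy $\int |Du|^p$ is not directly controlled and one must instead bootstrap from a sub-natural exponent; getting the exponent arithmetic to close is exactly the role of the constraints on $\beta'$ and the definition \eqref{4.2} of $q_0$. The second delicate point is the strong $L^q$-convergence of the gradients: in the subquadratic case one cannot simply use uniform monotonicity, so the Hölder-interpolation trick exploiting $q < q_0$ (strictly) is essential, and it is precisely why the proposition is stated for $q < q_0$ rather than $q = q_0$.
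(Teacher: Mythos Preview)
Your overall architecture matches the paper's: argue by contradiction, rescale to the unit ball, establish a uniform $W^{1,q}$ a priori bound for the almost-solutions (this is Lemma~\ref{thn11} in the paper, proved by testing with truncations $\phi^p T_t(\overline u)$ and a reverse-H\"older iteration), extract a limit $\tilde u$, show $\tilde u$ is $p$-harmonic, and obtain the contradiction from strong $L^q$ gradient convergence. So the strategy is right, and you have correctly located the role of the constraints on $\beta'$ and of the strict inequality $q<q_0$.

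However, there is a genuine gap in the mechanism you propose for the strong convergence $Du_k \to Du_\infty$ in $L^q$. You suggest testing the two equations with (a cutoff of) $u_k - u_\infty$ and invoking Lemma~\ref{tensor ineq}. The problem is that at this stage $u_\infty$ is only known to lie in $W^{1,q}$ for $q < q_0 < p$, so $u_k - u_\infty$ is \emph{not} an admissible test function in \eqref{4.3} (which demands $W^{1,p}_0 \cap L^{\beta'}$), and the $p$-harmonic equation for $u_\infty$ is not even well-posed yet since $|Du_\infty|^{p-2} Du_\infty$ need not pair with $W^{1,p}$ functions. The Minty--Browder trick you invoke for identifying the limit equation has the same defect: it operates in the energy space $W^{1,p}$, which is exactly what is missing. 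This is not a cosmetic issue --- the entire difficulty of the range $p \le 2 - \tfrac1n$ is that one must work strictly below the natural energy exponent.

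The paper's resolution (Lemma~\ref{u tilde}) replaces $u_\infty$ in the test function by an \emph{affine} approximation $\alpha_\sigma(x) = (\tilde u)_{B_\sigma(\bar x)} + D\tilde u(\bar x)\cdot(x-\bar x)$ at a generic Lebesgue point $\bar x$, and tests with $\phi\, T_\sigma(\overline u_j - \alpha_\sigma)$. Since $\alpha_\sigma$ is smooth this is admissible; a delicate decomposition into three terms $G^1_{j,\sigma}, G^2_{j,\sigma}, G^3_{j,\sigma}$ followed by a blow-up $\sigma \to 0$ shows that the weak-$L^q$ limit $h$ of $|D\overline u_j - D\tilde u|$ vanishes at $\bar x$, hence a.e. Only \emph{after} strong $L^q$ gradient convergence is established does one pass the nonlinear term to the limit in the weak formulation, and a separate Moser-type iteration (in the proof of Proposition~\ref{prop4.1} proper) then upgrades $D\tilde u$ to $L^p$ so that $\tilde u$ is genuinely $p$-harmonic in $W^{1,p}(\tfrac12 B)$. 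You have the right ingredients, but the order of operations and the choice of test function need to be reworked along these lines.
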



We divide the proof of Proposition \ref{prop4.1} into several parts.
To begin with, recall the following self-improving property of reverse Holder inequalities (cf.\ \cite[Lemma 3.38]{HK}).

\begin{lemma} \label{lem3.1} 
Let $0<q<a<\gamma<\vc$, $\xi \ge 0$ and $M\geq 0$.
Let $\nu$ be a non-negative Borel measure with finite total mass and $B \subset \R^n$ be a ball.
Suppose $0 \le g \in L^p(U,v)$ satisfies the following: there exists a $c_0 > 0$ such that
\[
\left(\tp_{\sigma_1 B}g^\gamma d\nu\right)^{1/\gamma} \leq \frac{c_0}{(\sigma -\sigma_1)^{\xi}} \left(\tp_{\sigma B}g^a d\nu \right)^{1/a} +M
\]
for all $\kappa \leq \sigma_1 < \sigma \leq 1,$ where $\kappa \in (0,1).$ Then there exists a $c = c(c_0,\xi,\sigma,a,q) > 0$ such that
\[
\left(\tp_{\sigma_1 B}g^\gamma \, d\nu\right)^{1/\gamma} \leq \frac{c}{(1 -\sigma)^{\zeta}} \left[\left(\tp_{\sigma B}g^q \, d\nu\right)^{1/q} +M\right]
\]
for all $\sigma \in (\kappa,1),$ where
\[
\zeta :=\frac{\xi \, p \,(\gamma -q)}{q \, (\gamma -a)}.
\]
\end{lemma}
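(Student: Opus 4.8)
The route is the classical one for self-improving (Gehring–type) reverse inequalities: an interpolation inequality to trade the $L^a$-average on the right for an $L^q$-average at the cost of a small multiple of the $L^\gamma$-average, followed by a standard iteration (hole-filling) argument to absorb that error. First I would fix $\theta\in(0,1)$ by $\frac1a=\frac\theta q+\frac{1-\theta}\gamma$, that is $\theta=\frac{q(\gamma-a)}{a(\gamma-q)}$. Since these are moments of the single fixed function $g$ against the single finite measure $\nu$, log-convexity of the $L^s(\nu)$-norms (equivalently Hölder's inequality) gives, for every $\rho\in[\kappa,1]$,
\[
\Big(\int_{\rho B}g^a\,d\nu\Big)^{1/a}\le\Big(\int_{\rho B}g^q\,d\nu\Big)^{\theta/q}\Big(\int_{\rho B}g^\gamma\,d\nu\Big)^{(1-\theta)/\gamma},
\]
with no constant and no use of any structure on $\nu$, so the generality of the measure causes no trouble. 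Plugging this into the hypothesis at $\rho=\sigma$ and using $\sigma_1 B\subset\sigma B$ so that the $L^\gamma$-term lives on $\sigma B$, I would then apply Young's inequality with conjugate exponents $\tfrac1\theta,\tfrac1{1-\theta}$, choosing the Young parameter to be the power of $(\sigma-\sigma_1)$ that makes the coefficient of $\big(\int_{\sigma B}g^\gamma\,d\nu\big)^{1/\gamma}$ equal to $\tfrac12$. Writing $\psi(\rho):=\big(\int_{\rho B}g^\gamma\,d\nu\big)^{1/\gamma}$, this yields
\[
\psi(\sigma_1)\le\tfrac12\,\psi(\sigma)+\frac{c_1}{(\sigma-\sigma_1)^{\zeta}}\Big(\int_{\sigma B}g^q\,d\nu\Big)^{1/q}+M\qquad(\kappa\le\sigma_1<\sigma\le1),
\]
where $c_1=c_1(c_0,\xi,a,q,\gamma)$ and the exponent produced by the Young step, namely $\xi/\theta$, is exactly the $\zeta$ of the statement.

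The second step is the iteration lemma. Fixing $\kappa\le\sigma_1<\sigma\le1$, I would choose $\lambda\in(0,1)$ close enough to $1$ that $\tfrac12\,\lambda^{-\zeta}<1$, and set $\tau_0=\sigma_1$, $\tau_{k+1}=\tau_k+(1-\lambda)\lambda^k(\sigma-\sigma_1)$, so that $\tau_k\uparrow\sigma$ and $\tau_{k+1}-\tau_k=(1-\lambda)\lambda^k(\sigma-\sigma_1)$. Applying the displayed hole-filling inequality along this chain of radii and iterating $m$ times gives
\[
\psi(\sigma_1)\le 2^{-m}\psi(\tau_m)+\Big(\int_{\sigma B}g^q\,d\nu\Big)^{1/q}\sum_{k=0}^{m-1}\frac{2^{-k}c_1}{\big((1-\lambda)\lambda^k(\sigma-\sigma_1)\big)^{\zeta}}+M\sum_{k=0}^{m-1}2^{-k}.
\]
The two series converge — the first by the choice of $\lambda$ — and $2^{-m}\psi(\tau_m)\to0$ as $m\to\infty$ because $\psi$ is finite and bounded on $[\sigma_1,\sigma]$: finiteness follows by applying the hypothesis once to the pair $(\sigma,\tfrac{\sigma+1}{2})$ (legitimate since $\kappa\le\sigma<1$) together with $g\in L^a(B,\nu)$, and boundedness then from the monotonicity of $\rho\mapsto\int_{\rho B}g^\gamma\,d\nu$. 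Letting $m\to\infty$ produces
\[
\Big(\int_{\sigma_1 B}g^\gamma\,d\nu\Big)^{1/\gamma}\le\frac{c}{(\sigma-\sigma_1)^{\zeta}}\Big[\Big(\int_{\sigma B}g^q\,d\nu\Big)^{1/q}+M\Big]\qquad(\kappa\le\sigma_1<\sigma\le1),
\]
with $c=c(c_0,\xi,a,q,\gamma)$, which is the asserted self-improving reverse inequality, stated here in the form most convenient for the iteration and equivalent, on the range of radii considered, to the one in the lemma.

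I do not anticipate a genuine obstacle, since the argument is entirely classical; the two points that demand care are calibrating the Young parameter precisely so that the resulting power of $(\sigma-\sigma_1)$ is $\zeta=\xi/\theta$ and not something larger, and, in the iteration, upgrading the easy finiteness of $\psi$ to boundedness on the compact interval in play — this is what forces the tail $2^{-m}\psi(\tau_m)$ to vanish, and it is the only place the standing integrability hypothesis on $g$ is used.
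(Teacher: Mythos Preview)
Your argument is correct and is precisely the classical proof of this self-improving lemma: interpolate $L^a$ between $L^q$ and $L^\gamma$, use Young's inequality to peel off $\tfrac12\,\psi(\sigma)$, and iterate along a geometric sequence of radii. The computation $\zeta=\xi/\theta=\xi\,a(\gamma-q)/\big(q(\gamma-a)\big)$ is right (the ``$p$'' in the displayed formula for $\zeta$ in the statement is evidently a typo for ``$a$'', as is the ``$L^p(U,v)$'' hypothesis), and your handling of the finiteness of $\psi$ on the iteration interval is the correct way to kill the tail $2^{-m}\psi(\tau_m)$.

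As for the comparison: the paper does not prove this lemma at all. It is quoted verbatim as \cite[Lemma~3.38]{HK} and used as a black box in the proof of Lemma~\ref{thn11}. Your proof is exactly the argument one finds in that reference, so there is no methodological difference to discuss. The one cosmetic discrepancy you flag --- your conclusion carries the factor $(\sigma-\sigma_1)^{-\zeta}$ while the paper writes $(1-\sigma)^{-\zeta}$ --- is, as you note, a matter of how the statement is recorded (the paper's version is somewhat loosely stated, with $c$ depending on $\sigma$ and $\sigma_1$ unquantified); your formulation is the standard one and is what is actually used downstream.
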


Next we will establish suitable a priori estimates for (scaled) weak solutions of \eqref{5hh070120148} under the assumptions in Proposition \ref{prop4.1}.

\begin{lemma} \label{thn11}
Let $M$ and $\beta$ be as in Proposition \ref{prop4.1}.
Let $\delta \in (0,1)$.
Suppose $\overline{u} \in W^{1,p}(B_1)$ satisfies
\begin{equation}\label{4.6}
\tb_{B_1} |\ngang{u}|dx\leq 1
\end{equation}
and
\begin{equation}\label{4.7}
\left|\tb_{B_1} |D\ngang{u}|^{p-2}D\ngang{u}:D\eta dx\right|\leq M^{1-p}\delta \norm{\eta}_{L^{\beta'}(B_1)}
\end{equation}
for all $\eta \in W^{1,p}_0(B_1) \cap L^{\beta'}(B_1)$.
Then there exists a $c = c(n,N,p,q)$ such that
\[
\|\overline{u}\|_{W^{1,q}(B_{3/4})}
\le c
\]
for all $q \in (1,q_0)$.
\end{lemma}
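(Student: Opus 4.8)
The plan is to derive the $W^{1,q}$ bound from a Caccioppoli-type inequality combined with the reverse H\"older self-improvement of Lemma \ref{lem3.1}. First I would fix a cutoff function $\phi \in C_c^\infty(B_1)$ with $\phi \equiv 1$ on $B_{\sigma_1}$, $\spp \phi \subset B_\sigma$ for $3/4 \le \sigma_1 < \sigma \le 1$, and $|D\phi| \le c/(\sigma-\sigma_1)$. The natural test function is $\eta = \phi^p (\ngang u - (\ngang u)_{B_\sigma})$ (or a suitable power of $\phi$), which lies in $W^{1,p}_0(B_1) \cap L^{\beta'}(B_1)$. Plugging this into \eqref{4.7} and expanding $D\eta$, the main term gives $\int \phi^p |D\ngang u|^p$, while the cross term is absorbed via Young's inequality at the cost of $\int |D\phi|^p |\ngang u - (\ngang u)_{B_\sigma}|^p$; the right-hand side of \eqref{4.7} is controlled using $\norm{\eta}_{L^{\beta'}(B_1)} \le c\,\norm{\ngang u - (\ngang u)_{B_\sigma}}_{L^{\beta'}(B_\sigma)}$.

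The key point is how to handle the term $\int |D\phi|^p |\ngang u - (\ngang u)_{B_\sigma}|^p$ and the $L^{\beta'}$ norm on the right: both must be converted into quantities involving $D\ngang u$ at a lower exponent so that Lemma \ref{lem3.1} applies. For the former I would use the Sobolev--Poincar\'e inequality $\norm{\ngang u - (\ngang u)_{B_\sigma}}_{L^p(B_\sigma)} \le c\,\norm{D\ngang u}_{L^{p_*}(B_\sigma)}$ where $p_* = np/(n+p) < p$ is the Sobolev conjugate exponent; for the latter, the hypothesis $\beta' > np/(n-p)$ is exactly what guarantees (by Sobolev embedding, since then $\beta'$ is below the Sobolev conjugate of the relevant lower exponent) that $\norm{\ngang u - (\ngang u)_{B_\sigma}}_{L^{\beta'}(B_\sigma)} \le c\,\norm{D\ngang u}_{L^{q_1}(B_\sigma)}$ for some $q_1 < p$; tracking the exponents, $q_1$ should work out to be precisely $q_0$ from \eqref{4.2}, which is why the threshold $q_0$ appears. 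This yields a reverse-H\"older-type inequality
\[
\Big(\tb_{B_{\sigma_1}} |D\ngang u|^p \, dx\Big)^{1/p} \le \frac{c}{(\sigma-\sigma_1)^{\xi}} \Big(\tb_{B_\sigma} |D\ngang u|^{a} \, dx\Big)^{1/a} + c\,\delta
\]
for an appropriate $a \in (q_0, p)$ and exponent $\xi$, with the constants depending only on $n,N,p$ (using $\delta < 1$, $M \ge 1$, and \eqref{4.6} to bound the additive term).

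Then I would invoke Lemma \ref{lem3.1} with $\gamma = p$, $q$ any exponent in $(1,q_0)$, and the measure $\nu = dx$ restricted to $B_1$ (normalized to finite mass, or just on the ball directly), together with $\kappa = 3/4$, to descend the integrability on the right all the way down to the $L^q$ level, obtaining $\big(\tb_{B_{3/4}} |D\ngang u|^p\big)^{1/p} \le c\big[\big(\tb_{B_{3/4}}\, \text{or } B_1\, |D\ngang u|^q\big)^{1/q} + 1\big]$; in fact one wants the left side at exponent $q$ and the right at a still smaller exponent, but a single application of the self-improving lemma handles this since we may take $q$ as small as we like below $q_0$. Finally, since $q < p$ and $|D\ngang u|^q$ is bounded on $B_1$ by \eqref{4.7} tested against a fixed cutoff (or by H\"older from any crude $W^{1,1}$-type bound, which in turn follows from \eqref{4.6}--\eqref{4.7}), the right-hand side is bounded by an absolute constant; combining with the Poincar\'e inequality and \eqref{4.6} to control $\norm{\ngang u}_{L^q(B_{3/4})}$ gives $\norm{\ngang u}_{W^{1,q}(B_{3/4})} \le c(n,N,p,q)$.

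The main obstacle I anticipate is the bookkeeping of exponents to verify that the constraint $np/(n-p) < \beta' < n/(n(2-p)-1)$ is exactly what makes the Sobolev embedding step produce an exponent $q_1 = q_0 < p$ (so that there is genuine room to apply the self-improving lemma) rather than an exponent $\ge p$ (which would make the argument circular). This is where the peculiar form of $q_0$ in \eqref{4.2} and the singular range $p \le 2 - 1/n$ enter, and getting the inequalities to close requires care; the Caccioppoli computation itself and the invocation of Lemma \ref{lem3.1} are then routine.
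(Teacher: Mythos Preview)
Your approach has a genuine gap at the step where you control $\norm{\ngang u - (\ngang u)_{B_\sigma}}_{L^{\beta'}(B_\sigma)}$ by $\norm{D\ngang u}_{L^{q_1}(B_\sigma)}$ with $q_1 < p$. The Sobolev--Poincar\'e inequality gives this bound only when $\beta' \le q_1^* = n q_1/(n-q_1)$, i.e.\ when $q_1 \ge n\beta'/(n+\beta')$; the condition $q_1 < p$ is then equivalent to $\beta' < np/(n-p)$, which is the \emph{opposite} of the standing hypothesis $\beta' > np/(n-p)$. In other words, under the assumptions of the lemma the $L^{\beta'}$-norm of your test function is \emph{not} controlled by any $W^{1,q_1}$-norm with $q_1 \le p$, so the reverse H\"older inequality you write down cannot close. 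For the same reason, the test function $\eta = \phi^p(\ngang u - (\ngang u)_{B_\sigma})$ is not known a priori to lie in $L^{\beta'}(B_1)$, and hence is not admissible in \eqref{4.7}.

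This is exactly the obstruction the paper's argument is designed to overcome. The paper tests \eqref{4.7} with $\eta = \phi^p T_t(\ngang u)$, where $T_t$ is the radial truncation at level $t$; this is bounded by $t$ and hence automatically in $L^{\beta'}$. The $L^{\beta'}$-norm is then estimated via interpolation as $\norm{\eta}_{L^{\beta'}} \le t^\theta \norm{\ngang u \, \phi^{p/(1-\theta)}}^{1-\theta}_{L^{\beta'(1-\theta)}}$, bringing in the \emph{strictly smaller} exponent $\beta'(1-\theta)$. After integrating in $t$ against $(1+t)^{-1-\gamma-\theta}$, applying Sobolev, and iterating over a carefully chosen sequence $\theta_k \to \theta_0$ (with $\theta_0$ determined by $\beta'(1-\theta_0) = n(p-\theta_0)/(n-p)$), one bootstraps the integrability of $\ngang u$ up to the threshold that yields $D\ngang u \in L^{q}$ for $q < q_0$; the exponent $q_0$ in \eqref{4.2} emerges from this iteration, not from a single Sobolev embedding. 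Lemma \ref{lem3.1} is used once inside this scheme, but the main work is the truncation-and-iteration, which your proposal omits entirely. Your final step is also circular: you invoke a quantitative bound on $\int_{B_1} |D\ngang u|^q$ to close the argument, but no such bound has been established---only the qualitative membership $\ngang u \in W^{1,p}(B_1)$ is assumed.
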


\begin{proof}
The main idea is to test \eqref{4.7} with suitable test functions.
Following \cite[Proof of Theorem 4.1]{KM} consider for each $t > 0$ the truncation operator $T_t:\R^N\ax\R^N$ defined by
\begin{equation}\label{4.8}
T_t(z):=\min\left\{1,\frac{t}{|z|}\right\}z.
\end{equation}
By direct calculations, $DT_t:\R^N \ax \R^N \otimes \R^N$ is given by
\begin{equation}\label{4.9}
DT_t(z)=
\left\{
\begin{array}{ll}
I & \text{if} \,\,\,|z|\leq t \\
\frac{t}{|z|}\left(I-\frac{z \otimes z}{|z|^2}\right) & \text{if} \,\,\, |z|>t,
\end{array}
\right.
\end{equation}
where $I:\R^N \ax \R^N \otimes \R^N$ denotes the identity operator.

Now let $\phi \in \Cc(B_1;\R)$ be such that $0\leq \phi\leq 1$ and then choose
\[
\eta:=\phi^p T_t\left(\ngang{u}\right)
\]
as a test function in \eqref{4.7}. 
We have
\begin{align*}
D \eta &=\1_{\{\ngang{u}< t \}}\left(\phi^p D\ngang{u}+p\phi^{p-1}\ngang{u}\otimes D\phi\right)\\
&\,\,\,\,+\1_{\{\ngang{u}\geq t \}} \frac{t}{|\ngang{u}|}\left(\phi^p(I-P)D\big(\ngang u\big)+p\phi^{p-1}\ngang{u}\otimes D\phi\right),
\end{align*}
where $P:=\frac{\ngang{u}\otimes \ngang{u}}{|\ngang{u}|^2}$.
Also notice that
\begin{equation}\label{4.11}
D\ngang{u}:\left[(I-P)D\ngang{u}\right]=|D\ngang{u}|^2-\frac{{u}^\alpha D_j\ngang{u}^\alpha {u}^k D_j\ngang{u}^k }{|u|^2}=|D\ngang{u}|^2-\frac{\sum_{j=1}^n (D_ju \cdot u)^2}{|u|^2}\geq 0
\end{equation}
and 
\[
\begin{aligned}
\norm{\eta}_{L^{\beta'}(B_1)}
& = \left(\int_{B_1 }\left|T_t\left(\ngang{u}\right)\right|^{\beta'} \phi^{p{\beta'}}dx\right)^{1/{\beta'}}=\left(\int_{B_1}\left|T_t\left(\ngang{u}\right)\right|^{\theta {\beta'}} \left|T_t\left(\ngang{u}\right)\right|^{{\beta'} (1-\theta)} \phi^{p{\beta'}}dx\right)^{1/{\beta'}}\\
& \leq t^\theta	\norm{\ngang{u}\phi^{\frac{p}{1-\theta}}}^{1-\theta}_{L^{{\beta'}(1-\theta)}(B_1)},
\end{aligned}	
\]
where $0<\theta<1$.

Substituting these into \eqref{4.7} and using Young's inequality we obtain
\begin{eqnarray}
\int_{B_1 \cap \left\{|\ngang{u}|<t\right\}}|D \ngang{u}|^p\phi^pdx 
&\leq& c\int_{B_1 \cap \left\{|\ngang{u}|<t\right\}}|\ngang{u}|^p|D\phi|^pdx +cM^{1-p}\delta  t^\theta	\norm{\ngang{u}\phi^{\frac{p}{1-\theta}}}^{1-\theta}_{L^{{\beta'}(1-\theta)}(B_1)}
\nonumber
\\
&& {}+ct\int_{B_1 \cap \left\{|\ngang{u}|\geq t\right\}}|D\ngang{u}|^{p-1}|D\phi|\phi^{p-1}dx
\label{4.12}
\end{eqnarray}
for some $c=c(n,N,p) > 0$.

For the rest of the proof we use $c = c(n,N,p)$ whose value may vary from line to line.

Next let $\gamma \in (0,1)$.
Multiplying \eqref{4.12} by $(1+t)^{-1-\gamma-\theta}$ and then integrating on $(0,\infty)$ with respect to $t$ give
\[
\begin{aligned}
\frac{1}{\theta+\gamma}\int_{B_1} \frac{|D\ngang{u}|^p\phi^p}{(1+|\un|)^{\gamma+\theta}}dx
&\leq \frac{c}{\gamma+\theta}\int_{B_1}(1+|\un|)^{p-\gamma-\theta}|D\phi|^pdx\\
&\quad  +\frac{c}{\gamma}\delta\norm{\ngang{u}\phi^{\frac{p}{1-\theta}}}^{1-\theta}_{L^{\beta'(1-\theta)}(B_1)}+c \int_{B_1} \frac{|\un||D\un|^{p-1}|D \phi|\phi^{p-1}}{(1+|\un|)^{\gamma+\theta}}dx.
\end{aligned}
\]

It follows from Young's inequality that
\[
\int_{B_1} \frac{|\un||D\un|^{p-1}|D \phi|\phi^{p-1}}{(1+|\un|)^{\gamma+\theta}}dx\leq \frac{1}{2c(\gamma+\theta)}\int_{B_1} \frac{|D\un|^p\phi^p}{(1+|\un|)^{\theta+\gamma}}dx+c(\gamma+\theta)^{p-1}\int_{B_1}(1+|\un|)^{-(\gamma+\theta)}|D\phi|^p|\un|^pdx.
\]

Consequently
\begin{equation}\label{4.14}
\int_{B_1} \frac{|D\un|^p\phi^p}{(1+|\un|)^{\theta+\gamma}}dx\leq c  \int_{B_1}(1+|\un|)^{p-\gamma-\theta}|D\phi|^pdx+\frac{c}{\gamma}\norm{\ngang{u}\phi^{\frac{p}{1-\theta}}}^{1-\theta}_{L^{\beta'(1-\theta)}(B_1)}
\end{equation}
The pointwise inequality $|D|\un||\leq |D\un|$ implies
\[
|D( (1+|\un|)^{1-\frac{\theta+\gamma}{p}}\phi)|^p \leq \frac{c|D\un|^p}{(1+|\un|)^{1+\gamma}}\phi^p+c(1+|\un|)^{p-\theta-\gamma}|D\phi|^p.
\]
Combining with \eqref{4.14}, we obtain
\begin{equation}\label{Z1}
\begin{aligned}
\int_{B_1}	|D( (1+|\un|)^{1-\frac{\theta+\gamma}{p}}\phi)|^pdx 
& \leq c \int_{B_1}(1+|\un|)^{p-\gamma-\theta}|D\phi|^pdx
+\frac{c}{\gamma}\norm{\ngang{u}\phi^{\frac{p}{1-\theta}}}^{1-\theta}_{L^{\beta'(1-\theta)}(B_1)}.
\end{aligned}
\end{equation}

Applying Sobolev's inequality to \eqref{Z1} and combining the derived estimate with \eqref{4.14} yield
\begin{equation}\label{2.18}
\begin{aligned}
\int_{B_1} \frac{|D\un|^p\phi^p}{(1+|\un|)^{\theta+\gamma}} \, dx+	\left(	\int_{B_1}	 (1+|\un|)^{\frac{(p-\theta-\gamma)n}{n-p}} \, \phi^{\frac{pn}{n-p}} \, dx\right)^{\frac{n-p}{n}}
\leq c & \int_{B_1}(1+|\un|)^{p-\gamma-\theta} \, |D\phi|^p \, dx 
\\
&{} +\frac{c}{\gamma} \, \norm{\ngang{u} \, \phi^{\frac{p}{1-\theta}}}^{1-\theta}_{L^{\beta'(1-\theta)}(B_1)}.
\end{aligned}
\end{equation}

Next let $7/8 \leq \sigma_1<\sigma\leq 1$ and $\psi\in \Cc(B_\sigma)$ be such that 
\[
0\leq \psi \leq 1,
\quad
\psi|_{B_{\sigma_1}} = 1
\quad
\mbox{and}
\quad
|D\psi|\leq \frac{100}{\sigma-\sigma_1}.
\]
With this choice of test function, we deduce from \eqref{2.18} that
\begin{equation}\label{thn5}
\begin{aligned}
\left(	\int_{B_{\sigma_1}}	 (1+|\un|)^{\frac{(p-\theta-\gamma)n}{n-p}} \, dx\right)^{\frac{n-p}{n}}
&\leq  \frac{c}{\sigma -\sigma_1}\int_{B_{\sigma}}(1+|\un|)^{p-\theta-\gamma} \, dx +\frac{c}{\gamma}\norm{\ngang{u}}^{1-\theta}_{L^{\beta'(1-\theta)}(B_\sigma)}
\end{aligned}
\end{equation}	
for all $\gamma,\theta \in (0,1).$

Now we choose $\theta, \gamma \in (0,1)$ such that $p - \theta - \gamma \ge 1$.
Then thanks to Lemma \ref{lem3.1} and \eqref{4.6}, we get
\begin{eqnarray}
\left(	\int_{B_{\sigma_1}}	 (1+|\un|)^{\frac{(p-\theta-\gamma)n}{n-p}} \, dx\right)^{\frac{n-p}{n}}
&\leq  \frac{c}{1-\sigma} 
+ \frac{c}{\gamma} \, \norm{\ngang{u}}^{1-\theta}_{L^{\beta'(1-\theta)}(B_\sigma)}
\nonumber
\le \frac{c}{1-\sigma} 
+ \frac{c}{\gamma} \, \norm{1+\ngang{u}}^{1-\theta}_{L^{\beta'(1-\theta)}(B_\sigma)}.
\label{thn7}
\end{eqnarray}	

The lemma can now be achieved by iterating \eqref{thn7} multiple times.
Indeed if we denote $b=\frac{n}{n-p}$ then \eqref{thn7} reads 
\begin{equation}\label{thn6}
\begin{aligned}
\norm{1+|\un|}^{p-\theta-\gamma}_{L^{b(p-\theta-\gamma)}(B_{\sigma_1})}
&\leq  \frac{c}{1 -\sigma}  + \frac{c}{\gamma}\norm{1+|\un|}^{1-\theta}_{L^{\beta'(1-\theta)}(B_\sigma)}.
\end{aligned}
\end{equation}

For each $k \in \N^*$ set $\gamma_k=(2\beta')^{-k}$ and $\theta_k$ such that
\[
\begin{cases}
\theta_1 =1-\frac{1}{\beta'},\\
\theta_{k+1}=1-\frac{b}{\beta'}(p-\theta_k-\gamma_k) \in (0,1).
\end{cases}
\]

Using \eqref{thn6}, \eqref{thn11}, we obtain
\begin{equation}\label{thn8}
\norm{1+|\un|}_{L^{\beta'(1-\theta_k)}(B_{7/8})}+\norm{1+|\un|}_{L^{b(p-\theta_k-\gamma_k)}(B_{7/8})} \leq  c_k
\end{equation}
for all $k\in \N^*$, where $ c_k=c_k(n,N,p,k).$

By extracting a subsequence when necessary, we may assume without loss of generality that $\li{k\to \vc}\theta_k=\theta_0$.  
Then
\[
\beta' \, (1-\theta_0)=\frac{(p-\theta_0) \, n}{n-p}
\]
or equivalently 
\[
\theta_0=\frac{\beta' \, (n-p)-p \, n}{\beta' \, (n-p)-n}.
\]

Observe that for all $a_1>0$ there exists a $k_1 \in \N^*$ such that $\theta_0 +\frac{a_1}{b} \geq \theta_{k_1}+\gamma_{k_1}$. 
Therefore \eqref{thn8} implies
\begin{equation}\label{thn4}
\int_{B_{7/8}}	 (1+|\un|)^{\frac{(p-\theta_0)n}{n-p}-a_1}dx \leq c(n,N,p,a_1)
\end{equation} 
for all $a_1 > 0$.
Choosing a suitable test function in \eqref{4.14} leads to
\[
\int_{B_{3/4}} \frac{|D\un|^p}{(1+|\un|)^{\theta+\gamma}}dx\leq c\int_{B_{7/8}}(1+|\un|)^{p-\gamma-\theta}dx+\frac{c}{\gamma} \norm{\ngang{u}}^{1-\theta}_{L^{\beta'(1-\theta)}(B_{7/8})}.
\]
Then \eqref{thn8} in turn implies
\begin{equation}\label{thn10}
\int_{B_{3/4}} \frac{|D\un|^p}{(1+|\un|)^{\theta_k+\gamma_k}}dx\leq  c_k,
\end{equation}
for all $k \in \N^*$, where $c_k=c_k(n,N,p,k).$

Analogously for all $a_2>0$ there exists a $k_2 \in \N^*$ such that $\theta_0 +a_2 > \theta_{k_2}+\gamma_{k_2}$.
Therefore \eqref{thn10} gives
\begin{equation}\label{thn12}
\int_{B_{3/4}} \frac{|D\un|^p}{(1+|\un|)^{\theta_0+a_2}} \, dx \leq c(n,N,p,a_2)
\end{equation}	
for all $a_2>0$.

Now let $a=\frac{\beta'(p-1)n}{\beta'(n-1)-n}$ and apply Holder's inequality for the exponent $\frac{p}{a-a_2}$ to arrive at
\begin{equation}\label{4.23}
\begin{aligned}
\int_{B_{3/4}} |D\un|^{\frac{\beta'(p-1)n}{\beta'(n-1)-n}-a_2} \, dx& =\int_{B_{3/4}}	 |D\un|^{a-a_2} (1+|\un|)^{-(\theta_0+a_2)(a-a_2)/p} \, (1+|\un|)^{(\theta_0+a_2)(a-a_2)/p} \, dx
\\
\leq & \left(\int_{B_{3/4}} \frac{|D\un|^p}{(1+|\un|)^{\theta_0+a_2}} \, dx\right)^{(a-a_2)/p}
\\
& \times \left(\int_{B_{3/4}}(1+|\un|)^{\frac{(\theta_0+a_2)(a-a_2)}{p-a+a_2}} \, dx \right)^{(p-a+a_2)/p}. 
\end{aligned}
\end{equation}

Since $(a-a_2)/(p-a+a_2) < a/(p-a)$ and $\beta'>np/(n-p)$, one has
\[
\frac{\theta_0a}{p-a}<\frac{(p-\theta_0)n}{n-p}
\]
and so
\[
\frac{(\theta_0+a_2)(a-a_2)}{p-a+a_2}<  \frac{(\theta_0+a_2)a}{p-a}<\frac{(p-\theta_0)n}{n-p}-a_1
\]
for all $a_1$, $a_2>0$ small enough.

By putting \eqref{thn4}, \eqref{thn12} and \eqref{4.23} together, 
\begin{equation}\label{thn13}
\int_{B_{3/4}} |D\un|^{\frac{\beta'(p-1)n}{\beta'(n-1)-n}-a_2}dx\le c(n,N,p,a_2)
\end{equation}
for sufficiently small $a_2>0$.

We now combine \eqref{thn5} and \eqref{thn13} to conclude that
\[
\int_{B_{3/4}}	 |\un|^{\frac{\beta'(p-1)n}{\beta'(n-p)-n}-a_1} \, dx
\leq c(n,N,p,a_1)
\quad \mbox{and} \quad
\int_{B_{3/4}} |D\un|^{\frac{\beta'(p-1)n}{\beta'(n-1)-n}-a_2} \, dx
\leq c(n,N,p,a_2)
\]
for all sufficiently small $a_1$, $a_2>0$ (and so trivially for all larger values of $a_1$ and $a_2$).

This verifies our claim.
\end{proof}

\begin{lemma} \label{u tilde}
Let $M$ and $\beta$ be as in Proposition \ref{prop4.1}.
Let $\{u_j\}_{j \in \N^*} \subset W^{1,p}(B_1)$ satisfy
\begin{equation} \label{4.27}
\tb_{B_1}|u_j|dx \leq 1
\end{equation}
and
\begin{equation}\label{4.28}
\left|\tb_{B_1}|D u_j|^{p-2}D u_j: D \varphi \, dx\right|\leq M^{1-p} \, 2^{-j}\left(\tb_{B_1} |\varphi(x)|^{\beta'} \, dx\right)^{1/\beta'}
\end{equation}
for all $\varphi\in W^{1,p}_0(B_1)\cap L^{\beta'}(B_1)$.
Then there exists a $\tilde{u} \in W^{1,q}(B_{3/4})$ such that
\[
\lim_{j \to \infty} u_j = \tilde{u} 
\quad \mbox{in } W^{1,q}(B_{3/4})
\]
for all $q \in (1,q_0)$.
Moreover,
\begin{equation} \label{u tilde eqn}
\tb_{B_{1/2}} |D \tilde{u}|^{p-2} \, D \tilde{u} : D\va \, dx = 0
\end{equation}
for all $\va \in C_c^\infty(B_{1/2})$.
\end{lemma}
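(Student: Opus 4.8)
The plan is to extract a strong $W^{1,q}$-limit from the sequence $\{u_j\}$ using the uniform bounds of Lemma \ref{thn11}, and then pass to the limit in the equation. First I would observe that each $u_j$ satisfies the hypotheses \eqref{4.6} and \eqref{4.7} of Lemma \ref{thn11} with $\delta = 2^{-j} \in (0,1)$: indeed \eqref{4.27} is exactly \eqref{4.6}, and \eqref{4.28} gives \eqref{4.7} since $\norm{\varphi}_{L^{\beta'}(B_1)} = |B_1|^{1/\beta'}\big(\tb_{B_1}|\varphi|^{\beta'}\big)^{1/\beta'}$ and $|B_1|$ is an absolute constant that can be absorbed into the implicit constants. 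Hence Lemma \ref{thn11} yields a uniform bound $\norm{u_j}_{W^{1,q}(B_{3/4})} \le c(n,N,p,q)$ for every $q \in (1,q_0)$.

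Next I would fix some $q_1$ with $q < q_1 < q_0$ and use the uniform bound in $W^{1,q_1}(B_{3/4})$ together with Rellich--Kondrachov to extract a subsequence converging strongly in $L^{q_1}(B_{3/4})$ and weakly in $W^{1,q_1}(B_{3/4})$ to some $\tilde u$. To upgrade to strong $W^{1,q}$-convergence of the gradients, the standard device is a Caccioppoli-type monotonicity argument: test the difference of the equations for $u_j$ and $u_k$ (localized by a cutoff) against $u_j - u_k$ and use the strict monotonicity inequality of Lemma \ref{tensor ineq}. In the singular range $p < 2$ the factor $(|Du_j|^2 + |Du_k|^2)^{(p-2)/2}$ is a \emph{negative} power, so one controls $\int |Du_j - Du_k|^q$ by Hölder: write $|Du_j-Du_k|^q = |Du_j-Du_k|^q (|Du_j|^2+|Du_k|^2)^{q(p-2)/4}(|Du_j|^2+|Du_k|^2)^{-q(p-2)/4}$, apply Hölder with exponents $2/q$ and $2/(2-q)$, bound the first factor by the monotonicity quantity and the second by the uniform $L^{q_1}$-gradient bound (this needs $q(2-p)/(2-q) < q_1$, which holds for $q$ close enough to $q_0$ after possibly shrinking, or one runs the argument for a slightly smaller exponent and then notes the bound for all $q \in (1,q_0)$). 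The monotonicity quantity itself, $\int \big(|Du_j|^{p-2}Du_j - |Du_k|^{p-2}Du_k\big):(Du_j - Du_k)\,\varphi\,dx$ with $\varphi$ a cutoff, is split into the ``right-hand side'' part, which is $\le (2^{-j}+2^{-k})\norm{(u_j-u_k)\varphi}_{L^{\beta'}} \to 0$ using the uniform $L^{\beta'} \subset L^{q_1}$-type bound, and commutator terms involving $D\varphi$ that vanish because $u_j - u_k \to 0$ strongly in $L^{q_1}$ while the stress fields $|Du_j|^{p-2}Du_j$ are bounded in $L^{q_1/(p-1)}$. This shows $\{Du_j\}$ is Cauchy in $L^q(B_{1/2})$ (or any slightly smaller ball), so $u_j \to \tilde u$ strongly in $W^{1,q}$, and since the limit is unique the full sequence converges.

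Finally, with strong $W^{1,q}$-convergence in hand, passing to the limit in \eqref{4.28} is routine: for fixed $\va \in C_c^\infty(B_{1/2})$ the map $w \mapsto |Dw|^{p-2}Dw$ is continuous from $L^q$ (in $Dw$) into $L^{q/(p-1)}$, and $q/(p-1) > 1$ since $q_0 < p < 2$ ensures we can choose $q > p-1$; hence $\tb_{B_{1/2}}|Du_j|^{p-2}Du_j : D\va\,dx \to \tb_{B_{1/2}}|D\tilde u|^{p-2}D\tilde u : D\va\,dx$, while the right side of \eqref{4.28} tends to $0$. This gives \eqref{u tilde eqn}.

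The main obstacle is the gradient-convergence step in the singular regime $1 < p \le 2 - \frac1n < 2$: the monotonicity inequality of Lemma \ref{tensor ineq} degenerates (its right-hand weight $(|z_1|^2+|z_2|^2)^{(p-2)/2}$ blows up where the gradients are small), so one cannot directly conclude $L^p$-convergence of gradients and must instead argue for the subcritical exponents $q < q_0$ via the Hölder-interpolation trick above, carefully checking that the resulting integrability exponents stay below $q_0$ so the uniform bounds of Lemma \ref{thn11} apply. Keeping track of which ball the cutoffs live on (shrinking from $B_{3/4}$ down to $B_{1/2}$) is a bookkeeping nuisance but causes no real difficulty.
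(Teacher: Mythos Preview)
Your Cauchy-sequence monotonicity argument has a genuine gap at the test-function level. The weak formulation \eqref{4.28} only admits $\varphi\in W^{1,p}_0(B_1)\cap L^{\beta'}(B_1)$, but by hypothesis $\beta'>\frac{np}{n-p}=p^*$, so neither the Sobolev embedding $W^{1,p}\hookrightarrow L^{p^*}$ nor the uniform bounds of Lemma~\ref{thn11} (which give $u_j$ bounded only in $L^s$ for $s$ strictly \emph{below} $\beta'$; see the exponent $\frac{\beta'(p-1)n}{\beta'(n-p)-n}<\beta'$ in that proof) place $(u_j-u_k)\varphi$ in $L^{\beta'}$, let alone with a uniform bound. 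So the ``right-hand side part $\leq (2^{-j}+2^{-k})\norm{(u_j-u_k)\varphi}_{L^{\beta'}}$'' step is not available as written.

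The natural fix is to truncate and use $\varphi\,T_t(u_j-u_k)$ instead, but in the \emph{vectorial} setting this is where the real difficulty lies: the chain rule \eqref{4.9} produces, on $\{|u_j-u_k|>t\}$, the projected gradient $(I-P_j)D(u_j-u_k)$ with $P_j=\frac{(u_j-u_k)\otimes(u_j-u_k)}{|u_j-u_k|^2}$, and the quantity $\big(|Du_j|^{p-2}Du_j-|Du_k|^{p-2}Du_k\big):(I-P_j)D(u_j-u_k)$ has no sign (unlike the scalar case, where truncation simply kills the gradient on the large set). The paper's proof avoids comparing $u_j$ with $u_k$ for exactly this reason: it first extracts weak limits $D\tilde u$, $h:=\text{w-}\lim|Du_j-D\tilde u|$ and $b:=\text{w-}\lim|Du_j|^{p-2}Du_j$, then shows $h(\bar x)=0$ at every simultaneous Lebesgue point by comparing $u_j$ with the \emph{affine} map $\alpha_\sigma(x)=(\tilde u)_{B_\sigma(\bar x)}+D\tilde u(\bar x)\cdot(x-\bar x)$ via the truncated test function $\phi\,T_\sigma(u_j-\alpha_\sigma)$. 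Because $D\alpha_\sigma$ is constant, the sign-indefinite $(I-P)$ terms (the $G^2_{j,\sigma}$ part) can be shown to vanish in the double limit $j\to\infty$, $\sigma\to0$ using the Lebesgue-point property; this is the core vectorial device, borrowed from \cite{KM}, that your scheme is missing.
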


\begin{proof}
Let $1<q<q_0$ and $q_1=(q+q_0)/2$.
By Lemma \ref{thn11}, there exists a $c = c(n,N,p,q)$ such that
\begin{equation}\label{4.31}
\int_{B_{3/4}}|D\un_j|^qdx\le c
\quad \text{and} \quad 
\int_{B_{3/4}} |D\un_j|^{q_1}dx\le c
\end{equation}
uniformly in $j \in \N^*$. 

For convenience we will constantly use $c = c(n,N,p,q)$ without mentioning further, the value of which may vary from line to line.

By passing to a subsequence if necessary, we may assume there exist $\nga{u} \in W^{1,q}(B_{3/4})$, $b \in L^{q/(p-1)}(B_{3/4})$ and $h \in L^q(B_{3/4})$ such that
\begin{eqnarray}
&& \tp_{B_{3/4}}|D\nga{u}|^qdx+\su_{j}\tp_{B_{3/4}}|D \un_j|^qdx+\su_j\tp_{B_{3/4}}|D\un_j|^{q_1}dx<\vc,
\label{4.32a}
\\
&& D\un_j \hty D\nga{u}, \quad |D\un_j-D\nga{u}|\hty h \quad \text{weakly in}\,\, L^q(B_{3/4}),
\label{4.32b}
\\
&& |D\un_j|^{p-2}D\un_j \hty b \quad \text{weakly in}\quad L^{q/(p-1)}(B_{3/4}) \quad \mbox{and}
\label{4.32c}
\\
&& \un_j \hoi \nga{u} \quad \text{strongly in} \, L^q(B_{3/4})\,\, \text{and pointwise in}\, B_{3/4}.
\label{4.32d}
\end{eqnarray} 
As a consequence of \eqref{4.27} and \eqref{4.31} we have
\begin{equation} \label{4.33}
\tb_{B_{3/4}}|\nga{u}|dx\leq 2^n
\quad \mbox{and} \quad
\int_{B_{3/4}} |D\nga{u}|^qdx \le c.
\end{equation}

Next we aim to prove that $h=0$ almost everywhere, from which the lemma follows at once.
To this end it suffices to show that
\begin{equation}\label{4.38}
h\left(\ngang{x}\right)=0
\end{equation}
for all $\ngang{x}\in B_{3/4}$ which is a Lebesgue point simultaneously for $\tilde{u}$, $D\tilde{u}$, $h$ and $b$, that is,
\begin{equation}\label{4.36}
\mathop {\lim\limits}_{\theta \to 0}\tb_{B_\theta\left(\ngang{x}\right)}\Bigg[|\nga{u}-\nga{u}\left(\ngang{x}\right)|+|D\nga{u}-D\nga{u}\left(\ngang{x}\right)|+|h-h\left(\ngang{x}\right)|+|b-b\left(\ngang{x}\right)|^{1/(p-1)}\Bigg]^qdx=0
\end{equation}
and
\begin{equation}\label{4.37}
|\nga{u}\left(\ngang{x}\right)|+|D\nga{u}\left(\ngang{x}\right)|+|h\left(\ngang{x}\right)|+|b\left(\ngang{x}\right)|<\vc.
\end{equation}

To see this, with \eqref{4.38} in mind, $D\ngang{u}_j\to D\nga{u}$ strongly in $L^1(B_{3/4})$. 
Whence the second bound in \eqref{4.33} and interpolation yield
\[
\norm{D\ngang{u}_j-D\nga{u}}_{L^q(B_{3/4})}\leq \norm{D\ngang{u}_j-D\nga{u}}_{L^1(B_{3/4})}\norm{D\ngang{u}_j-D\nga{u}}_{L^{q_1}(B_{3/4})}^{1-\theta} \htu{j \to \vc} 0,
\]
where $\theta$ is such that $1/q=\theta+(1-\theta)/{q_1}$.

Now back to the proof of \eqref{4.38}, let $\ngang{x}\in B_{3/4}$ be a simultaneous Lebesgue point for $\tilde{u}$, $D\tilde{u}$, $h$ and $b$.
Set
\[
\alpha_\sigma(x):=\big(\nga{u}\big)_{B_\sigma\left(\ngang{x}\right)}+ D\nga{u}\left(\ngang{x}\right) \cdot (x-\ngang{x})
\]
for all $\sigma \in (0,3/4)$.
Poincare's inequality for $\alpha_\sigma$ implies
\begin{equation}\label{4.40}
\lim\limits_{\sigma\to 0} \tb_{B_\sigma\left(\ngang{x}\right)} \left|\frac{\nga{u}-\alpha_\sigma}{\sigma}\right|^q dx \leq c \lim\limits_{\sigma \to 0} \tb_{B_\sigma\left(\ngang{x}\right)}|D \nga u-D \nga u\left(\ngang{x}\right)|^q dx=0.
\end{equation}

By \eqref{4.32b} we have
\begin{eqnarray}
h\left(\ngang{x}\right)
&=& \li{\sigma\to 0} \li{j \to \vc} \tb_{B_{\sigma/2}\left(\ngang{x}\right)} |D \un_j-D\nga{u}|dx
\nonumber 
\\
&=& \li{\sigma\to 0} \li{j \to \vc} \tb_{B_{\sigma/2}\left(\ngang{x}\right)} \1_{\{|\un_j-\alpha_\sigma|<
\sigma\}}|D \un_j-D\nga{u}|dx
+ \li{\sigma\to 0} \li{j \to \vc} \tb_{B_{\sigma/2}\left(\ngang{x}\right)} \1_{\{|\un_j-\alpha_\sigma|\geq
\sigma\}}|D \un_j-D\nga{u}|dx	
\nonumber
\\
&=:& I + II.
\label{4.41} 
\end{eqnarray}
We aim to show that $I = II = 0$.
For this we estimate each term separately.
Term $II$ turns out to be easier to estimate so we do it first.

{\bf Term $II$}:
We first show that
\begin{equation} \label{t2 aim}
\lim_{j \to \infty} \tb_{B_{\sigma/2}\left(\ngang{x}\right)} \1_{\{|\un_j-\alpha_\sigma|\geq
\sigma\}}|D \un_j-D\nga{u}| \, dx
\le \tb_{B_{\sigma/2}\left(\ngang{x}\right)} \1_{\{|\nga{u}-\alpha_\sigma|\geq
\sigma\}}h \, dx.
\end{equation}

To this end note that
\begin{equation*}
\begin{aligned}
\tb_{B_{\sigma/2}\left(\ngang{x}\right)} \1_{\{|\un_j-\alpha_\sigma|\geq
\sigma\}}|D \un_j-D\nga{u}|dx
\leq & \tb_{B_{\sigma/2}\left(\ngang{x}\right)} \1_{\{|\un_j-\nga{u}|\geq
\sigma/2\}}|D \un_j-D\nga{u}|dx
\\
& +\tb_{B_{\sigma/2}\left(\ngang{x}\right)} \1_{\{|\nga{u}-\alpha_\sigma|\geq
\sigma/2\}}|D \un_j-D\nga{u}|dx.
\end{aligned}
\end{equation*}
By invoking \eqref{4.32a} and \eqref{4.32d} one has
\begin{align*}
\tb_{B_{\sigma/2}\left(\ngang{x}\right)} \1_{\{|\un_j-\nga{u}|\geq \sigma\}}|D \un_j-D\nga{u}|dx 
\leq & \, \Bigg(\tb_{B_{\sigma/2}\left(\ngang{x}\right)} |D \un_j-D\nga{u}|^qdx\Bigg)^{1/q} 
\Bigg(\frac{|\{x\in B_{3/4}:|\un_j-\nga{u}|\}|\geq \sigma/2}{|B_{\sigma/2}\left(\ngang{x}\right)|}\Bigg)^{1/q'} 
\\
& \htu{j \to \vc} 0.
\end{align*}
This justifies \eqref{t2 aim}.

Next we use \eqref{4.36}, \eqref{4.37} and \eqref{4.40} to obtain
\begin{align*}
\tb_{B_{\sigma/2}\left(\ngang{x}\right)} \1_{\{|\nga{u}-\alpha_\sigma| 
\geq \sigma/2\}}h \, dx
&\leq \Bigg(\tb_{B_{\sigma}\left(\ngang{x}\right)}h^qdx\Bigg)^{1/q}\Bigg(\tb_{B_{\sigma}\left(\ngang{x}\right)}\1_{\{|\nga{u}-\alpha_\sigma|\geq \sigma/2\}} \, dx\Bigg)^{1/q'}\\
& \leq c\Bigg[ \Bigg(\tb_{B_{\sigma}\left(\ngang{x}\right)}|h-h\left(\ngang{x}\right)|^q \, dx\Bigg)^{1/q}
+h\left(\ngang{x}\right)\Bigg]\Bigg(\tb_{B_\sigma\left(\ngang{x}\right)} \left|\frac{\ngang{u}-\alpha_\sigma}{\sigma}\right|^q dx\Bigg)^{1/q'}
\\
& \quad \htu{\sigma \to 0} 0.
\end{align*}
Hence $II = 0$.

{\bf Term $I$}:
One has
\begin{align*}
\tb_{B_{\sigma/2}\left(\ngang{x}\right)} \1_{\{|\un_j-\alpha_\sigma|<
\sigma\}}|D \un_j-D\nga{u}|dx 
\leq & \, \tb_{B_{\sigma/2}\left(\ngang{x}\right)} \1_{\{|\un_j-\alpha_\sigma|<
\sigma\}}|D \un_j-D\alpha_\sigma|dx
\\
& +2^n\tb_{B_{\sigma/2}\left(\ngang{x}\right)} \1_{\{|\un_j-\alpha_\sigma|<
\sigma\}}|D\nga{u}-D\alpha_\sigma|dx.
\end{align*}
Since 
\[
\li{\sigma\to 0} \lsu{j \to \vc} \tb_{B_{\sigma/2}\left(\ngang{x}\right)} \1_{\{|\un_j-\alpha_\sigma|< 
\sigma\}}|D \nga{u}-D\alpha_\sigma|dx\leq  \li{\sigma\to 0} \tb_{B_{\sigma/2}\left(\ngang{x}\right)} |D \nga{u}-D\nga{u}\left(\ngang{x}\right)|dx=0
\]
by \eqref{4.36}, it remains to show that
\begin{equation}\label{4.45}
\li{\sigma\to 0} \lsu{j \to \vc} \tb_{B_{\sigma/2}\left(\ngang{x}\right)} \1_{\{|\un_j-\alpha_\sigma|< 
\sigma\}}|D\un_j-D\alpha_\sigma|dx=0.
\end{equation}

By Holder's inequality,
\begin{eqnarray*}
\tb_{B_{\sigma/2}\left(\ngang x\right)} \1_{\{|\un_j-\alpha_\sigma|<\sigma\}}|D\un_j-D\alpha_\sigma| dx 
&\leq& \Bigg(\tb_{B_{\sigma/2}\left(\ngang x\right)}  \1_{\{|\un_j-\alpha_\sigma|<\sigma\}}\Big(|D\un_j|+|D\alpha_\sigma|\Big)^{p-2}|D\un_j-D\alpha_\sigma|^2  dx\Bigg)^{1/2}\\
&& {} \times \Bigg(\tb_{B_{\sigma/2}\left(\ngang x\right)}  \Big(|D\un_j|+|D\alpha_\sigma|\Big)^{2-p}  dx\Bigg)^{1/2}.
\end{eqnarray*}
The second integral on the right-hand side is bounded uniformly in $j$ due to \eqref{4.31} and \eqref{4.33}.
Hence to achieve \eqref{4.45}, it suffices to show that 
\[
\li{\sigma\to 0} \lsu{j \to \vc}
\tb_{B_{\sigma/2}\left(\ngang x\right)}  \1_{\{|\un_j-\alpha_\sigma|<\sigma\}}\Big(|D\un_j|+|D\alpha_\sigma|\Big)^{p-2}|D\un_j-D\alpha_\sigma|^2  dx
= 0.
\]

To this end, let $\phi \in \Cc(B_\sigma(\overline{x}))$ be such that
\[
0\leq \phi \leq 1,
\quad 
\phi|_{B_{\sigma/2}\left(\xn\right)} = 1 
\quad \mbox{and} \quad 
|D\phi|\leq \frac{4}{\sigma}.
\]
Set $\eta:=\phi \, T_{\sigma}(\un_j-\alpha_\sigma)$, where $T_\sigma$ is defined by \eqref{4.8}.
It follows from \eqref{4.9} that
\begin{align*}
& \left(|D\un_j|^{p-2}D\un_j-|D\alpha_\sigma|^{p-2}D\alpha_\sigma \right):D\eta\\
&\quad = \, \1_{\{\un_j-\alpha_\sigma\}} \left[\left(|D\un_j|^{p-2}D\un_j-|D\alpha_\sigma|^{p-2}D\alpha_\sigma
\right) :D(\un_j-\alpha_\sigma)\right]\phi  \\
& \quad \quad {}+ \1_{\{|\un_j-\alpha_\sigma|>\sigma\}} \, \frac{\sigma }{|\un_j-\alpha_\sigma|}\left[\left(|D\un_j|^{p-2}D\un_j-|D\alpha_\sigma|^{p-2}D\alpha_\sigma
\right) :(I-P_j)D(\un_j-\alpha_\sigma)\right]\phi\\
& \quad \quad {} +\left(|D\un_j|^{p-2}D\un_j-|D\alpha_\sigma|^{p-2}D\alpha_\sigma
\right) : \left[T_\sigma (\un_j-\alpha_\sigma)\otimes D\phi\right]\\
&\quad =:\,G^1_{j,\sigma}(x)+G^2_{j,\sigma}(x)+G^3_{j,\sigma}(x),
\end{align*}
where
\[
P_j:=\frac{\left(\un_j-\alpha_\sigma\right)\otimes \left(\un_j-\alpha_\sigma\right)}{|\un_j-\alpha_\sigma|^2}
\quad \mbox{and} \quad
P:=\frac{\left(\nga u-\alpha_\sigma\right)\otimes \left(\nga u-\alpha_\sigma\right)}{|\nga u-\alpha_\sigma|^2}.
\]
Since $\alpha_\sigma$ is affine, one has
\[
\int_{B_1} |D\alpha_\sigma|^{p-2}D\alpha_\sigma:D\eta \, dx=0.
\]
Therefore
\begin{equation}\label{4.46}
0\leq \tb_{B_\sigma\left(\ngang{x}\right)} G^1_{j,\sigma}(x) \, dx
\leq 2^{-j}\sigma^{1-n}-\tb_{B_\sigma\left(\ngang{x}\right)} G^2_{j,\sigma}(x) \, dx-\tb_{B_\sigma\left(\ngang{x}\right)} G^3_{j,\sigma}(x) \, dx,
\end{equation}
where we used the monotonicity of the vector field $z\ax |z|^{p-2}z$ in the first step. 

Next we estimate the two integrals on
the right-hand side of the above inequality.

{\bf Integral of $G^3_{j,\sigma}$}: 
First we deduce from \eqref{4.31} that $\{|D\un_j|^{p-2}D\un_j\}_{j \in \N^*}$ is bounded in $L^{q/(p-1)}$.
This together with \eqref{4.32c} and \eqref{4.32d} imply that
\[
\li{j \to \vc} \tb_{B_\sigma\left(\ngang{x}\right)} G^3_{j,\sigma}(x) \, dx
= \tb_{B_\sigma\left(\ngang{x}\right)}\left(b-|D\alpha_\sigma|^{p-2}D\alpha_\sigma\right):\left[T_\sigma(\ngang{u}-\alpha_\sigma)\otimes D\phi\right]dx.
\]
Holder's inequality then gives
\begin{eqnarray*}
&& \left|\tb_{B_\sigma\left(\ngang{x}\right)}\left(b-|D\alpha_\sigma|^{p-2}D\alpha_\sigma\right):\left[T_\sigma(\nga{u}-\alpha_\sigma)\otimes D\phi\right]dx\right|\\
&\le& c \left(\tb_{B_\sigma\left(\xn\right)} |b-b\left(\xn\right)|^{q/(p-1)}+|b\left(\xn\right)|^{q/(p-1)}+|D\nga{x}\left(\ngang{x}\right)|^q dx\right)^{\frac{p-1}{q}}\\
&& {} \times \left(\tb_{B_\sigma\left(\xn\right)} \left(\frac{\min\{\sigma,|\nga{u}-\alpha_\sigma|\}}{\sigma}\right)^{\frac{q}{q-(p-1)}}dx\right)^{1-\frac{p-1}{q}}.
\end{eqnarray*}
Note that the first integral on the right-hand side is bounded.
For the second integral, we have
\begin{align*}
\tb_{B_\sigma\left(\xn\right)} \left(\frac{\min\{\sigma,|\nga{u}-\alpha_\sigma|\}}{\sigma}\right)^{\frac{q}{q-(p-1)}}dx
&\leq \tb_{B_\sigma\left(\xn\right)} \left(\frac{\min\{\sigma,|\nga{u}-\alpha_\sigma|\}}{\sigma}\right)^qdx\\
&\leq \tb_{B_\sigma\left(\xn\right)} \left|\frac{\nga{u}-\alpha_\sigma}{\sigma}\right|^qdx \htu{\sigma \to 0}0,
\end{align*}
where we used the fact that $\frac{q}{q-(p-1)}>q$ and \eqref{4.40} in the first and second steps respectively.

Consequently
\[
\li{\sigma\to 0} \li{j \to \vc} \Bigg|\tb_{B_\sigma\left(\xn\right)}G^3_{j,\sigma}(x)dx\Bigg|=0.
\]

{\bf Integral of $G^2_{j,\sigma}$}: 
We have $D\un_j:\left[(I-P_j)D\un_j\right]\geq 0$ by a similar argument to that of \eqref{4.11}.
Therefore
\begin{eqnarray}
&& \left(|D\un_j|^{p-2}D\un_j-|D\alpha_\sigma|^{p-2}D\alpha_\sigma\right):(I-P_j)D(\un_j-\alpha_\sigma)
\nonumber
\\
&\ge& -|D\un_j|^{p-2} D\un_j:(I-P_j)D\alpha_\sigma-|D\alpha_\sigma|^{p-2}D\alpha_\sigma:(I-P_j)D(\un_j-\alpha_\sigma).
\label{4.48}
\end{eqnarray}

Observe also that $\1_{\{|\un_j-\alpha_\sigma|\geq \alpha_\sigma\}}P_j \to \1_{\{|\nga{u}-\alpha_\sigma|\geq \alpha_\sigma\}}P$ a.e.\ and hence strongly in $L^s(B_{3/4})$ for every $s \geq 1$.
The same also applies to the convergence $\1_{\{|\un_j-\alpha_\sigma|\geq \alpha_\sigma\}}|\un_j-\alpha_\sigma|^{-1} \to \1_{\{|\nga{u}-\alpha_\sigma|\geq \alpha_\sigma\}}|\nga{u}-\alpha_\sigma|^{-1}$. 
These in combination with \eqref{4.48} and \eqref{4.32c} yield that
\[
\begin{aligned}
\lsu{j \to \vc} \Bigg(-\tb_{B_\sigma\left(\xn\right)}G^2_{j,\sigma}(x)dx\Bigg)\leq& \tb_{B_\sigma\left(\xn\right)} b: (I-P)D\alpha_\sigma \frac{\sigma \1_{\{|\nga{u}-\alpha_\sigma|>\sigma\}}}{|\nga{u}-\alpha_\sigma|}dx\\
&{}+\tb_{B_\sigma\left(\xn\right)} |D\alpha_\sigma|^{p-2}D\alpha_\sigma:(I-P)D(\nga u-\alpha_\sigma)\frac{\sigma \1_{\{|\nga{u}-\alpha_\sigma|>\sigma\}}}{|\nga{u}-\alpha_\sigma|}dx.
\end{aligned}
\]

Next we estimate each on the right-hand side separately. 
As $q>p-1$ there exists an $s>1$ such that $\frac{q(s-1)}{q-p+1}\leq q$. 
Keeping in mind \eqref{4.40} one has
\begin{eqnarray*}
\left|\tb_{B_\sigma\left(\xn\right)} b: (I-P)D\alpha_\sigma \frac{\sigma \1_{\{|\nga{u}-\alpha_\sigma|>\sigma\}}}{|\nga{u}-\alpha_\sigma|}dx\right|
&\leq& c\tb_{B_\sigma\left(\xn\right)}|b|\left|\frac{\nga u -\alpha_\sigma}{\sigma}\right|^{s-1}dx\\
&\leq& c \Bigg(\tb_{B_\sigma\left(\xn\right)} |b|^{q/(p-1)}dx\Bigg)^{(p-1)/q}\Bigg(\tb_{B_\sigma\left(\xn\right)} \left|\frac{\un-\alpha_\sigma}{\sigma}\right|^qdx\Bigg)^{(s-1)/q}
\\
&& \htu{\sigma \to 0}0.
\end{eqnarray*}
At the same time,
\begin{eqnarray*}
&& \tb_{B_\sigma\left(\xn\right)} |D\alpha_\sigma|^{p-2}D\alpha_\sigma:(I-P)D(\nga u-\alpha_\sigma)\frac{\sigma \1_{\{|\nga{u}-\alpha_\sigma|>\sigma\}}}{|\nga{u}-\alpha_\sigma|}dx
\\
&\leq& c\tb_{B_\sigma\left(\xn\right)}|D(\nga u-\alpha_\sigma)|\left|\frac{\nga u -\alpha_\sigma}{\sigma}\right|^{q-1}dx\\
&\leq& c \Bigg(\tb_{B_\sigma\left(\xn\right)} |D\un-D\un\left(\ngang x\right)|^qdx\Bigg)^{1/q}\Bigg(\tb_{B_\sigma\left(\xn\right)} \left|\frac{\un-\alpha_\sigma}{\sigma}\right|^q dx\Bigg)^{1-1/q}
\\
&& \htu{\sigma \to 0}0.
\end{eqnarray*}
As a consequence,
\[
\lsu{\sigma \to 0} \lsu{j \to \vc}  \Bigg(-\tb_{B_\sigma\left(\xn\right)} G^2_{j,\sigma}(x) dx\Bigg)\leq 0.
\]
This finishes our estimate for the integral of $G^2_{j,\sigma}$.

Continuing with \eqref{4.46} we conclude that
\begin{equation}\label{4.50}
\lsu{\sigma \to 0} \lsu{j \to \vc}  \tb_{B_\sigma\left(\xn\right)} G^1_{j,\sigma}(x) dx =0.
\end{equation}

We proceed with the proof of \eqref{4.45}.
It follows from \eqref{4.50} and Lemma \ref{tensor ineq} that
\[
\lsu{\sigma \to 0} \lsu{j \to \vc} \tb_{B_\sigma\left(\xn\right)} \1_{\{|\un_j-\alpha_\sigma|<\sigma\}}\Big(|D\un_j|+|D\alpha_\sigma|\Big)^{p-2}|D\un_j-D\alpha_\sigma|^2 \phi \, dx = 0.
\]

Hence $I = 0$.

That $h(\overline{x})=0$ now follows from \eqref{4.41}, whence $Du \in L^q(B_{3/4})$.
Lastly, we let $j \longrightarrow \infty$ in \eqref{4.28} to obtain \eqref{u tilde eqn}.
This completes our proof.
\end{proof}

We now have enough preparation to derive Proposition \ref{prop4.1}.

\begin{proof}[{\bf Proof of Proposition \ref{prop4.1}}]
We proceed via a proof by contradiction.
Our arguments follow \cite[Step 5 in Proof of Theorem 4.1]{KM} closely.

For a contradiction, assume that there exist an $\epsilon > 0$ and sequences of balls $\{B_{r_j}(x_j)\}_{j \in \N^*}$ and $\{u_j\}_{j \in \N^*} \subset W^{1,p}(B_{r_j}(x_j))$ such that 
\begin{equation} \label{uj counter}
\tb_{B_{r_j}(x_j)} |u_j| \, dx \le M \, r_j
\quad \mbox{and} \quad
\left| \tb_{B_{r_j}(x_j)} |Du_j|^{p-2} \, Du_j : D\phi \, dx \right|
\le \frac{2^{-j}}{r_j} \, \|\phi\|_{L^{\beta'}(B_{r_j}(x_j))}
\end{equation}
for all $\phi \in W^{1,p}_0(B_{r_j}(x_j)) \cap L^{\beta'}(B_{r_j}(x_j))$, whereas
\[
\left( \tb_{B_{r_j/2}(x_j)} |Du_j - Dv|^q \right)^{1/q} > \epsilon
\]
for all $v \in W^{1,p}(B_{r_j/2}(x_j))$ being $p$-harmonic in $B_{r_j}(x_j)$ and satisfying
\[
\tb_{B_{r_j/2}(x_j)} |v| \, dx \le 2^n \, M \, r_j
\quad \mbox{and} \quad
\left( \tb_{B_{r_j/2}(x_j)} |Dv|^q \right)^{1/q} 
\le \left( \frac{2^n \, c}{|B_1} \right)^{1/q} \, M
\]
for all $q \in (1,q_0)$, where $c = c(n,N,p,q)$.

For the rest of the proof, $c$ will always denote a constant depending on $n$, $N$, $p$, $q$ only whose value may vary from line to line.

We first perform a scaling on $u_j$ for all $j \in \N$.
For convenience, we denote $u_0 = u$.
For each $j \in \N$ and  $\va \in W^{1,p}_0(B)\cap L^{\beta'}(B)$ let
\[
\overline{u}_j(x)=\frac{u_j(x_0+rx)}{Mr}
\quad \text{and}\quad 
\eta(x)=\frac{\va(x_0+rx)}{r}.
\] 
Then \eqref{4.1}, \eqref{4.3} and \eqref{uj counter} become
\begin{equation}\label{uj counter scale 1}
\tb_{B_1} |\ngang{u}_j|dx\leq 1
\end{equation}
and
\begin{equation}\label{uj counter scale 2}
\left|\tb_{B_1} |D\ngang{u}_j|^{p-2}D\ngang{u}_j:D\eta dx\right|\leq M^{1-p} \, \delta_j \, \norm{\eta}_{L^{\beta'}(B_1)},
\end{equation}
where 
\[
\delta_j := \left\{
\begin{array}{ll}
\delta & \mbox{if } j = 0,
\\
2^{-j} & \mbox{otherwise}.
\end{array}
\right.
\]

It follows from Lemma \ref{thn11} that
\[
\|\un_j\|_{W^{1,q}(B_{3/4})}
\leq c
\]
for all $q \in (1,q_0)$ and $j \in \N$.

Using Lemma \ref{u tilde} there exists a $\tilde{u} \in W^{1,q}(B_{3/4})$ such that
\[
\lim_{j \to \infty} u_j = \tilde{u} 
\quad \mbox{in } W^{1,q}(B_{3/4})
\]
for all $q \in (1,q_0)$ with the property that
\[
\tb_{B_{1/2}} |D \tilde{u}|^{p-2} \, D \tilde{u} : D\va \, dx = 0
\]
for all $\va \in C_c^\infty(B_{1/2})$.

We aim to show that $\nga u$ is $p$-harmonic. 
In particular,  we will show that $D\nga u \in L^p(B_{1/2})$.

Let $\phi \in \Cc(B_{3/4})$ be such that $0 \le \phi \le 1$ and $\phi|_{B_{1/2}} = 1$.
It follows from \eqref{4.12} that
\begin{eqnarray*}
\int_{B_1 \cap \left\{|\ngang{u}_j|<t\right\}}|D \ngang{u}_j|^p\phi^p \, dx 
&\leq& c\int_{B_1 \cap \left\{|\ngang{u}_j|<t\right\}}|\ngang{u}_j|^p|D\phi|^p \, dx +cM^{1-p} \, \delta_j \, t^\theta	\norm{\ngang{u}_j \phi^{\frac{p}{1-\theta}}}^{1-\theta}_{L^{{\beta'}(1-\theta)}(B_1)}
\nonumber
\\
&& {}+ct\int_{B_1 \cap \left\{|\ngang{u}_j|\geq t\right\}}|D\ngang{u}|^{p-1}|D\phi| \, \phi^{p-1} \, dx.
\end{eqnarray*}
By taking the inferior limit both sides of this inequality when $j \to \vc$ and then referring to Fatou's lemma for the left-hand side, one has
\[
\int_{B_{3/4}\cap \{|\nga u|<t\}} |D \nga u|^p \phi ^p dx \leq c \int_{B_{3/4}\cap \{|\nga u|<t\}} |\nga u|^p |D\phi|^p dx+ct\int_{B_{3/4}\cap \{|\nga u|\geq t\}} |D \nga u|^{p-1} |D\phi|\phi^{p-1} dx
\]
for all $t>0$.

Next let $\gamma \in (0,1)$.
By multiplying the above inequality by $(1+t)^{-1-\gamma}$, integrating over $(0,\vc)$ with respect to $t$ and then invoking Fubini's theorem we arrive at
\begin{eqnarray*}
\frac{1}{\gamma}\int_{B_{3/4}} \frac{|D \nga u|^p\phi^p}{(1+|\nga u|)^\gamma}dx
& \leq& \frac{c}{\gamma} \int_{B_{3/4}} (1+|\nga u|)^{p-\gamma}|D \phi|^pdx
\\
&& {} +c\int_0^\vc \frac{1}{(1+t)^\gamma} \int_{B_{3/4}\cap \{|\nga u|\geq t\}} |D \nga u|^{p-1}|D \phi|\phi^{p-1} \, dx \, dt.
\end{eqnarray*}
To handle the second integral on the right-hand side of this inequality, an application of Fubini's theorem and Young's inequality gives
\begin{eqnarray*}
c\int_0^\vc \frac{1}{(1+t)^\gamma} \int_{B_{3/4}\cap \{|\nga u|\geq t\}} |D \nga u|^{p-1}|D \phi|\phi^{p-1}dxdt 
&\leq& \frac{c}{1-\gamma} \int_{B_{3/4}} |D \nga u|^{p-1}(1+|\nga u|)^{1-\gamma}|D \phi|\phi^{p-1}dx\\
&\leq& \frac{1}{2\gamma} \int_{B_{3/4}} \frac{|D \nga u|^p \phi^p}{(1+|\nga u|)^\gamma}dx
\\
&& {} +\frac{c\gamma^{p-1}}{(1-\gamma)^{p}} \int_{B_{3/4}}(1+|\nga u|)^{p-\gamma}|D \phi|^p dx.
\end{eqnarray*}	
Hence
\begin{equation}\label{4.54}
\int_{B_{3/4}} \frac{|D \nga u|^p \phi^p}{(1+|\nga u|)^\gamma}dx \leq \frac{c}{(1-\gamma)^{p}} \int_{B_{3/4}}(1+|\nga u|)^{p-\gamma}|D \phi|^p dx.
\end{equation}

From this there are two possibilities.
If $n<p^2$ then $p < q_0$, from which it follows that $u \in L^p(B_{3/4})$.
So taking $\gamma \to 0$ in \eqref{4.54} yields $Du \in L^p(B_{1/2})$.
It remains to consider $p^2\leq n$.
In this case choose $\gamma \geq \frac{n-p^2}{n-p}$.
Using the fact that $\tilde{u} \in W^{1,q}(B_{3/4})$ for all $q \in (1,q_0)$ we deduce that right-hand side in \eqref{4.54} is finite.

Since
\[
\Big|D\big((1+|\nga u|)^{\frac{p-\gamma}{p}}\big)\Big|^p\leq \Big(1-\frac{\gamma}{p}\Big)^p |D \nga u|^p \big(1+|\nga u|\big)^{-\gamma},
\]
\eqref{4.54} implies that
\begin{equation}\label{4.55}
\int_{B_{3/4}}\left|D\Big((1+|\nga u|)^{\frac{p-\gamma}{p}}\phi\Big)\right|^p dx \leq \frac{c}{(1-\gamma)^p} \int_{B_{3/4}}  \big(1+|\nga u|\big)^{p-\gamma}|D \phi|^p \, dx.
\end{equation}
Set $\theta=\frac{n}{n-p}=\frac{p^*}{p}$, where $p^*$ denotes the Sobolev's exponent.
Using Sobolev's inequality and \eqref{4.55}, we obtain
\begin{eqnarray}
\Bigg(\int_{B_{3/4}}\Big((1+|\nga u|)^{1-\gamma/p}\phi\Big)^{\theta p} dx \Bigg)^{1/\theta}
&\le& \frac{c}{(1-\gamma)^p} \int_{B_{3/4}}  \big(1+|\nga u|\big)^{p-\gamma}\phi |D \phi|^p \, dx.
\label{4.56}
\end{eqnarray}

Next we use an iterating argument in the spirit of (finite) Moser's interation to derive the claim.
Define
\[
q_j = \theta^j(p-\gamma), 
\quad \gamma_j = p-q_j,
\quad
B_j = B_{5/8 + 1/(j+1)}
\]
and correspondingly choose $\{\phi_j\}_{j \in \N} \subset \Cc(B^j)$ such that 
\[
0 \leq \phi_j \leq 1,
\quad
\phi_{j+1} \leq  \phi_j
\quad \mbox{and} \quad
\phi_j|_{B_{j+1}}=1
\]
for all $j \in \N$. 
Note that $\{\gamma_j\}_{j \in \N}$ is decreasing. 

Now \eqref{4.56} reads
\[
\Bigg(\int_{B_{3/4}}\big(1+|\nga u|\big)^{\theta(p-\gamma_j)}\phi_j^{\theta p}\Bigg)^{1/\theta} \leq c \int_{B_{3/4}} \big(1+|\nga u|\big)^{p-\gamma_j}|D \phi_j|^pdx
\]
for all $j \in \N$,	provided that $\gamma_j > 0.$ 
In other words $u \in L^{q_j}(B^j)$ implies $u \in L^{q_{j+1}}(B^{j+1})$ for all $j \in \N$ such that $\gamma_j>0$. 

Let $j_0 \in \N$ be the smallest number such that $\gamma_{j_0+1}\leq 0$.
Then $u \in L^{q_{j_0+1}}(B^{j_0+1})$. 
This in particular yields $\nga u \in L^p(B_{5/8})$. 
Combining this with \eqref{4.54} and then taking the limit when $\gamma \to 0$ give $D \nga u \in L^p(B_{1/2})$. 

The claim now follows by reversing the scaling process at the beginning of the proof. 
\end{proof}

The following lemmas are direct consequences of Proposition \ref{prop4.1}.

\begin{lemma}\label{lemma5.2}
Let $\beta \in (1,\infty)$ be such that 
\[
\frac{np}{n-p}<\beta'<\frac{n}{n(2-p)-1}.
\]
Let $B = B_r(x_0)$ be a ball and $f \in L^{\beta}(B).$
Let $u\in W^{1,p}(B)$ be a weak solution to \eqref{5hh070120148} in $B$.
Let $\varepsilon\in (0,1)$ and $q \in (1,q_0)$, where $q_0$ is defined in \eqref{4.2}.
Then there exist $\delta=\delta\left(n,N,p,q,\varepsilon\right)\in (0,1)$ and a $p$-harmonic map $v$ in $\frac{1}{2}B$ such that
\begin{equation}\label{5.6}
\Big(\tb_{\frac{1}{2}B}|Du-Dv|^qdx\Big)^{1/q}\leq \frac{\varepsilon}{r}\tb_{B}|u-(u)_{B_r}|dx+\frac{\varepsilon}{\delta^{1/(p-1)}} \left[r\left(\tb_{B} |f|^{\beta} dx\right)^{1/{\beta}}\right]^{1/(p-1)}.
\end{equation}
\end{lemma}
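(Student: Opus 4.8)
The plan is to deduce Lemma \ref{lemma5.2} from Proposition \ref{prop4.1} by a standard normalization-and-rescaling argument. First I would reduce to the case where the hypotheses of Proposition \ref{prop4.1} hold by replacing $u$ with a suitably translated, scaled and shifted function. Precisely, set $M := 1 + \frac{1}{r}\tb_B |u - (u)_{B_r}|\,dx$ and consider $w(x) := \frac{u(x) - (u)_{B_r}}{M}$ on $B$; then $\tb_B |w|\,dx \le r$, so \eqref{4.1} holds with this $M$. Since $u$ is a weak solution of \eqref{5hh070120148} in $B$ and adding a constant does not affect $Du$, one computes for $\varphi \in W^{1,p}_0(B)\cap L^{\beta'}(B)$ that
\[
\left| \tb_B |Dw|^{p-2} Dw : D\varphi \, dx \right|
= M^{1-p} \left| \tb_B |Du|^{p-2} Du : D\varphi \, dx \right|
\le \frac{M^{1-p}}{|B|}\int_B |f|\,|\varphi|\,dx,
\]
and by Hölder's inequality the right side is bounded by $M^{1-p}\big(\tb_B|f|^{\beta'}\big)^{1/\beta'}\big(\tb_B|\varphi|^{\beta'}\big)^{1/\beta'}$ — wait, more carefully one pairs $|f|$ against $|\varphi|$ with exponents $\beta$ and $\beta'$, getting $M^{1-p}\big(\tb_B|f|^{\beta}\,dx\big)^{1/\beta}\big(\tb_B|\varphi|^{\beta'}\,dx\big)^{1/\beta'}$ up to the ball-measure normalization. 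So \eqref{4.3} holds for $w$ with $\frac{\delta}{r}$ replaced by $M^{1-p}\big(\tb_B|f|^{\beta}\,dx\big)^{1/\beta}$.

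Next I would split into two cases according to whether this quantity is small. Fix $\varepsilon \in (0,1)$ and $q \in (1,q_0)$, and let $\delta = \delta(n,N,p,q,\varepsilon) \in (0,1)$ be the constant from Proposition \ref{prop4.1} (applied with the $\varepsilon$ there taken to be a suitable multiple of the $\varepsilon$ here, and with $M$ as above — note $M \ge 1$ as required). \emph{Case 1:} $r M^{1-p}\big(\tb_B|f|^{\beta}\,dx\big)^{1/\beta} \le \delta$, i.e.\ $M^{1-p}\big(\tb_B|f|^{\beta}\big)^{1/\beta} \le \frac{\delta}{r}$. Then \eqref{4.3} holds for $w$, so Proposition \ref{prop4.1} produces a $p$-harmonic map $\tilde v$ on $\frac12 B$ with $\big(\tb_{\frac12 B}|Dw - D\tilde v|^q\,dx\big)^{1/q} \le \varepsilon$. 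Undoing the scaling, $v(x) := M\tilde v(x) + (u)_{B_r}$ (composed back with the affine change of variables) is $p$-harmonic in $\frac12 B$ and $Dv = M D\tilde v$, so
\[
\Big(\tb_{\frac12 B}|Du - Dv|^q\,dx\Big)^{1/q} \le \varepsilon M = \varepsilon\Big(1 + \tfrac{1}{r}\tb_B|u-(u)_{B_r}|\,dx\Big),
\]
and the stray $\varepsilon$ is absorbed into the first term of \eqref{5.6} (after possibly replacing $\varepsilon$ by a fixed multiple). \emph{Case 2:} $M^{1-p}\big(\tb_B|f|^{\beta}\big)^{1/\beta} > \frac{\delta}{r}$. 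Then I simply take $v \equiv 0$ (trivially $p$-harmonic, or more precisely I would still invoke the size bounds in Proposition \ref{prop4.1} to keep $v$ with controlled norm; but for \eqref{5.6} as stated $v=0$ suffices since only the difference estimate is claimed), and use the a priori bound $\big(\tb_{\frac12 B}|Du|^q\,dx\big)^{1/q} \le c M$ from Lemma \ref{thn11} (rescaled); then $cM < c M^{p-1}\cdot M^{2-p}$... rather, directly: from $M^{p-1} < \frac{r}{\delta}\big(\tb_B|f|^\beta\big)^{1/\beta}$ one gets $M < \big(\frac{r}{\delta}(\tb_B|f|^\beta)^{1/\beta}\big)^{1/(p-1)}$, hence $\big(\tb_{\frac12 B}|Du|^q\big)^{1/q} \le cM \le \frac{c}{\delta^{1/(p-1)}}\big[r(\tb_B|f|^\beta)^{1/\beta}\big]^{1/(p-1)}$, which is the second term of \eqref{5.6} (absorbing $c$ into $\varepsilon$ is not legitimate, so I would instead track constants carefully and note $\varepsilon$ can be taken as large as needed in this branch, or restate with the correct constant).

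The technical heart is bookkeeping: matching the $\varepsilon$ and $M$ of Proposition \ref{prop4.1} to the $\varepsilon$ of the present lemma, verifying the rescaling identities $Dw(x) = \frac{r}{M}Du(x_0+rx)$ and that $p$-harmonicity and the various average integrals transform correctly under $x \mapsto x_0 + rx$, and confirming that the Hölder pairing of $f$ against test functions really yields the $L^\beta$-average of $f$ on the right-hand side rather than an $L^{\beta'}$-average. I would also double-check that $M \ge 1$ so Proposition \ref{prop4.1} is applicable, and that the constant $\delta$ it supplies depends only on the allowed parameters $(n,N,p,q,\varepsilon)$ once $M$ is eliminated — here one should be slightly careful, since Proposition \ref{prop4.1}'s $\delta$ nominally depends on $M$, but $M$ is itself determined by $u$; the way around this is to observe that the homogeneity of the problem lets one take $M=1$ after the normalization (the normalized $w$ has $\tb_B|w| \le r$ with the \emph{fixed} choice $M=1$ absorbed, provided one defines $M$ to be exactly that average plus the regularizing $1$), so $\delta$ depends only on $(n,N,p,q,\varepsilon)$ as claimed. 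The main obstacle I anticipate is not any single estimate but rather getting all these scalings and the case division to fit together cleanly so that the final bound has precisely the form \eqref{5.6} with a legitimate constant; once the normalization is set up correctly, each case is a one-line consequence of Proposition \ref{prop4.1} or of Lemma \ref{thn11}.
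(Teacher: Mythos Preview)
Your normalization-and-rescaling instinct is right, but the execution has two genuine gaps, and the paper's proof shows how to avoid both with a single cleaner choice of scaling factor.

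\textbf{Gap 1 (the stray $\varepsilon$).} In your Case~1 you obtain $\big(\tb_{\frac12 B}|Du-Dv|^q\big)^{1/q}\le \varepsilon M=\varepsilon+\frac{\varepsilon}{r}\tb_B|u-(u)_B|$. The additive $\varepsilon$ cannot be absorbed into either term of \eqref{5.6}: if $u$ is nearly constant and $f$ is small in $L^\beta$, both terms on the right of \eqref{5.6} are tiny, yet your bound is still at least $\varepsilon$. The ``$+1$'' you inserted to force $M\ge 1$ is precisely what breaks the homogeneity.

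\textbf{Gap 2 (Case 2).} You invoke Lemma~\ref{thn11} to get $\big(\tb_{\frac12 B}|Du|^q\big)^{1/q}\le cM$, but Lemma~\ref{thn11} requires the smallness hypothesis \eqref{4.7}, and Case~2 is \emph{defined} by the failure of exactly that inequality for the normalized $w$. So the a~priori gradient bound is not available as stated. One can re-normalize by a larger factor to restore \eqref{4.7}, but then the output constant is a fixed $c$ rather than $\varepsilon$, and you yourself note that absorbing $c$ into $\varepsilon$ is illegitimate.

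\textbf{The paper's remedy.} Instead of normalizing by the oscillation alone, set
\[
\lambda:=\frac{1}{r}\tb_B|u-(u)_B|\,dx+\Big[\frac{r}{\delta}\Big(\tb_B|f|^\beta\Big)^{1/\beta}\Big]^{1/(p-1)},
\]
with $\delta$ the constant from Proposition~\ref{prop4.1} applied with $M=1$. If $\lambda=0$ take $v=u$. Otherwise put $\bar u=(u-(u)_B)/\lambda$: since $\lambda$ dominates the first summand one gets $\tb_B|\bar u|\le r$, and since $\lambda^{p-1}$ dominates the second one gets \eqref{4.3} for $\bar u$ with this very $\delta$. Proposition~\ref{prop4.1} (with $M=1$, so $\delta$ depends only on $n,N,p,q,\varepsilon$) yields a $p$-harmonic $\bar v$ with $\big(\tb_{\frac12 B}|D\bar u-D\bar v|^q\big)^{1/q}\le\varepsilon$; scaling back with $v=\lambda\bar v$ gives exactly $\varepsilon\lambda$, which is \eqref{5.6}. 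No case split, no stray constants, and no appeal to Lemma~\ref{thn11}.
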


\begin{proof}
We use a scaling argument with 
\begin{equation}\label{lemma5.3}
\ngang u:=\frac{u-(u)_{B}}{\lambda} 
\quad \mbox{and} \quad 
\ngang f:=\frac{f}{\lambda^{p-1}},
\end{equation}
where
\[
\lambda:=\frac{1}{r}\tb_{B}\left|u-(u)_{B}\right|dx+\left[\frac{r}{\delta}\left(\tb_{B} |f|^{\beta} dx\right)^{1/{\beta}}\right]^{1/(p-1)}
\]
and $\delta=\delta(n,N,p,q,\varepsilon)$ is given in Proposition \ref{prop4.1} with $M=1$.

It follows that
\[
\tb_{B}|\un|dx\leq r
\quad \mbox{and} \quad
-\Delta_p\ngang u=\ngang f \quad \text{in}\,\, B.
\]

If $\lambda = 0$ then $u$ is constant and so we can choose $v= u$.

Next assume that $\lambda>0$.
We have
\[
\Bigg|\tb_{B}|D \un|^{p-2}D\un:D\va dx\Bigg|\leq \frac{1}{\lambda^{p-1}}\left(\tb_{B} |\varphi|^{\beta'} dx\right)^{1/\beta'}\left(\tb_{B} |f|^{\beta} dx\right)^{1/{\beta}}\leq \frac{\delta}{r}\left(\tb_{B} |\varphi|^{\beta'} dx\right)^{1/\beta'},
\]
for all $\varphi\in W^{1,p}_0(B)\cap L^{\beta'}(B)$. Therefore by Proposition \ref{prop4.1} there exists a $p$-harmonic map $\ngang v$ in $\frac{1}{2}B$ such that
\[
\Big(\tb_{\frac{1}{2}B}|D\un-D\ngang v|^qdx\Big)^{1/q}\leq \varepsilon.
\]
Scaling back to $u$ with $v=\lambda \ngang v$ we obtain \eqref{5.6}.
To finish note that $v$ is $p$-harmonic.
\end{proof}

\begin{proposition} \label{inter} 
Adopt the assumptions and notation in Lemma \ref{lemma5.2}.
Then there exist constants $$\delta=\delta\left(n,N,p,q,\varepsilon\right)\in (0,1), C = C(n,p,q)>0$$ and a $p$-harmonic map $v\in W^{1,p}(\frac{1}{2}B)$ such that
\begin{align*}
\left(\tb_{\frac{1}{2}B}|D u-D v|^qdx\right)^{\frac{1}{q}}
\leq \frac{\varepsilon}{\delta^{1/(p-1)}} \left[r\left(\tb_{B} |f|^{\beta} dx\right)^{1/{\beta}}\right]^{1/(p-1)}+  \varepsilon\left(\tb_{B}|D u|^qdx\right)^{1/q}
\end{align*}
and
\begin{align*}
\|D v\|_{L^\infty(\frac{1}{4}B)}\leq \frac{C\varepsilon}{\delta^{1/(p-1)}}\left[r\left(\tb_{B} |f|^{\beta} dx\right)^{1/{\beta}}\right]^{1/(p-1)}
+ C(1+\vae) \left(\tb_{B}|D u|^{q}dx\right)^{1/q}.
\end{align*}
\end{proposition}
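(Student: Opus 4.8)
The plan is to deduce Proposition \ref{inter} from Lemma \ref{lemma5.2} by combining it with the standard interior Lipschitz (and higher integrability) estimates for $p$-harmonic maps, together with a Poincar\'e-type inequality to absorb the term involving $u - (u)_{B_r}$. First I would apply Lemma \ref{lemma5.2} to obtain a $p$-harmonic map $v$ on $\tfrac12 B$ and the bound \eqref{5.6}. The only discrepancy between \eqref{5.6} and the first claimed inequality is that \eqref{5.6} carries the oscillation term $\frac{\varepsilon}{r}\tb_B |u - (u)_{B_r}|\,dx$ instead of $\varepsilon\big(\tb_B |Du|^q\,dx\big)^{1/q}$. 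To handle this I would invoke the Poincar\'e inequality on $B = B_r(x_0)$,
\[
\frac{1}{r}\tb_B |u - (u)_{B_r}|\,dx \leq c\,\tb_B |Du|\,dx \leq c\Big(\tb_B |Du|^q\,dx\Big)^{1/q},
\]
using H\"older in the last step since $q > 1$. Substituting into \eqref{5.6} and renaming the constant into $\varepsilon$ (or absorbing it, at the cost of the universal constant $c(n)$, which can be hidden by shrinking $\delta$ accordingly) yields the first displayed estimate. One should be slightly careful that the $\delta$ produced by Lemma \ref{lemma5.2} still only depends on $n,N,p,q,\varepsilon$ after this rescaling of $\varepsilon$; this is immediate since the Poincar\'e constant depends on $n$ alone.

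For the second inequality I would use the interior gradient bound for $p$-harmonic maps: since $v$ is $p$-harmonic in $\tfrac12 B$, one has the classical $C^{1,\alpha}_{\mathrm{loc}}$ estimate (due to Uhlenbeck, as cited, and available from \cite[Sections 2 and 3]{KM}) in the form
\[
\|Dv\|_{L^\infty(\frac14 B)} \leq C\Big(\tb_{\frac12 B} |Dv|^q\,dx\Big)^{1/q}
\]
for some $C = C(n,p,q)$ — this is the sup-bound for the gradient of a $p$-harmonic map in terms of any of its $L^q$ averages on a slightly larger ball, which follows by a Moser iteration / De Giorgi argument on the equation satisfied by $Dv$ (or, more softly, from scale-invariance of the $C^{1,\alpha}$ estimate). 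Then I would estimate $\big(\tb_{\frac12 B}|Dv|^q\big)^{1/q}$ by the triangle inequality:
\[
\Big(\tb_{\frac12 B}|Dv|^q\Big)^{1/q}
\leq \Big(\tb_{\frac12 B}|Du - Dv|^q\Big)^{1/q} + \Big(\tb_{\frac12 B}|Du|^q\Big)^{1/q}
\leq \Big(\tb_{\frac12 B}|Du - Dv|^q\Big)^{1/q} + 2^{n/q}\Big(\tb_B |Du|^q\Big)^{1/q},
\]
and then bound the first term on the right by the first inequality of the proposition already proved. Collecting terms gives
\[
\|Dv\|_{L^\infty(\frac14 B)} \leq \frac{C\varepsilon}{\delta^{1/(p-1)}}\Big[r\Big(\tb_B |f|^\beta\,dx\Big)^{1/\beta}\Big]^{1/(p-1)} + C(1+\varepsilon)\Big(\tb_B |Du|^q\,dx\Big)^{1/q},
\]
which is exactly the claimed bound after adjusting $C$ to absorb the dimensional factor $2^{n/q}$.

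The main obstacle, and the step deserving the most care, is the a priori sup-bound $\|Dv\|_{L^\infty(\frac14 B)} \leq C\big(\tb_{\frac12 B}|Dv|^q\big)^{1/q}$ for $q \in (1,q_0)$, possibly with $q < 2$: one must make sure this Lipschitz estimate for $p$-harmonic \emph{maps} (vector-valued, $N\geq 1$) holds with the stated dependence on $q$ and with scaling-invariant constants. This is where I would lean on the framework of \cite{KM}, specifically their treatment of $p$-harmonic maps, rather than rederiving it; the scaling structure of the $p$-Laplacian makes the constant automatically of the form $C(n,p,q)$, with no dependence on the radius $r$ or the centre. A secondary technical point is to ensure the $p$-harmonic map $v$ is the \emph{same} one throughout (the one produced by Lemma \ref{lemma5.2}), so that the two inequalities in the proposition refer to a common $v$; this is automatic since we never reselect $v$.
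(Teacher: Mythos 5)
Your proposal is correct and follows essentially the same route as the paper: apply Lemma \ref{lemma5.2} and remove the oscillation term via Poincar\'e and H\"older, then invoke the interior Lipschitz bound $\|Dv\|_{L^\infty(\frac{1}{4}B)}\le C\big(\tb_{\frac{1}{2}B}|Dv|^q\,dx\big)^{1/q}$ for $p$-harmonic maps from \cite[(3.6)]{KM} and conclude by the triangle inequality. Your version merely spells out the final combination step more explicitly than the paper does.
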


\begin{proof} 
Using Lemma \ref{lemma5.2}, Poincare's and Holder's inequalities, there exists a $p$-harmonic map $v\in W^{1,p}(\frac{1}{2}B)$ such that
\[
\Big(\tb_{\frac{1}{2}B}|Du-Dv|^qdx\Big)^{1/q}\leq \vae \left(\tb_{B}|D u|^qdx\right)^{1/q}+\frac{\varepsilon}{\delta^{1/(p-1)}}\left[r\left(\tb_{B} |f|^{\beta} dx\right)^{1/{\beta}}\right]^{1/(p-1)}.
\]

Next it follows from \cite[(3.6)]{KM} that
\[
\|D v\|_{L^\infty(\frac{1}{4}B)}\leq C \tb_{\frac{1}{2}B}|D v|dx \leq C \left(\tb_{\frac{1}{2}B}|D v|^qdx\right)^{1/q}
\]
for a constant $C=C(n,p,q).$

The claim now follows by combining these two estimates together.  
\end{proof}

\section{Good-$\lambda$ type bounds}\label{sec-3}

In this section we present a good-$\lambda$-type estimate - Proposition \ref{5hh23101312}.
In order to do this, we need two auxiliary results.

The first one can be viewed as a (weighted) substitution for the Calderon-Zygmund-Krylov-Safonov decomposition (cf.\ \cite{MP}).

\begin{lemma}\label{phu} Let $\q$ be an $\mathbf{A}_\infty$-weight and $B$ be a ball of radius $R$ in $\R^n$.  Let $E\subset  F \subset B$ be measurable and $\vae \in (0,1)$ satisfy the following property:  
\begin{tabeleq}
\item $\q(E)<\varepsilon \, \q\big(B\big).$
\item $\q(E\cap B_\rho(x))\geq \varepsilon \, \q(B_\rho(x))$ implies $B_\rho(x) \cap B  \subset F$ for all $x\in B$ and $\rho\in (0,R]$.
\end{tabeleq}
Then there exists a $C = C(n,[\q]_{\mathbf{A}_\infty})$ such that $\q(E)\leq C \varepsilon \, \q(F)$.         
\end{lemma}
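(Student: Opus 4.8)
The plan is to run the classical Calderón–Zygmund–Krylov–Safonov stopping-time argument, but in the measure $\q$ rather than Lebesgue measure, using the doubling and $\mathbf{A}_\infty$-type comparability of $\q$ to balls. First I would invoke a Vitali-type (or dyadic-type) covering scheme adapted to $\q$: by hypothesis (i), $\q(E) < \varepsilon\, \q(B)$, so for $\q$-almost every $x \in E$ the density ratio $\q(E \cap B_\rho(x))/\q(B_\rho(x))$ exceeds $\varepsilon$ for $\rho$ small and is below $\varepsilon$ for $\rho$ comparable to $R$ (using that $\q$ is doubling, a consequence of the $\mathbf{A}_\infty$ condition). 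Hence for each such $x$ there is a largest (or a Lebesgue-differentiation-type maximal) radius $\rho_x \in (0,R]$ with $\q(E \cap B_{\rho_x}(x)) = \varepsilon\, \q(B_{\rho_x}(x))$, while for all smaller radii the ratio is $\ge \varepsilon$ — more precisely I would use a stopping-time argument on dyadic-like scales to produce, for $\q$-a.e.\ $x\in E$, a ball $B_x = B_{\rho_x}(x)$ such that $\q(E\cap B_x) \le \varepsilon\, \q(B_x)$ together with $\q(E \cap \widehat{B}_x) > \varepsilon\, \q(\widehat{B}_x)$ for a fixed dilate $\widehat{B}_x = c_0 B_x$ (here the precise bookkeeping is where care is needed; one takes the first scale at which the density drops below $\varepsilon$).

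Next I would extract from $\{\widehat{B}_x\}_{x\in E}$ a countable, bounded-overlap (Besicovitch) or Vitali $5r$-type subfamily $\{\widehat{B}_i\}_{i}$ covering $\q$-a.e.\ of $E$, with the property that the associated non-dilated balls $\{B_i\}$ have controlled overlap. The key point is then: on the one hand $\q(E \cap \widehat{B}_i) > \varepsilon\, \q(\widehat{B}_i)$, and at a slightly larger scale the density is $\le \varepsilon$ — combined with doubling of $\q$ this forces $\q(E \cap B_i') \ge \varepsilon\, \q(B_i')$ for a comparable ball $B_i'$, so by hypothesis (ii) we get $B_i' \cap B \subset F$. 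Therefore
\[
\q(E) \le \sum_i \q(E \cap \widehat{B}_i) \le \varepsilon \sum_i \q(\widehat{B}_i) \le C\, \varepsilon \sum_i \q(B_i') \le C\, \varepsilon\, \q(F),
\]
where the last inequality uses the bounded overlap of the $B_i'$ (all contained, up to intersecting with $B$, in $F$) together with the doubling constant of $\q$; the doubling and overlap constants depend only on $n$ and $[\q]_{\mathbf{A}_\infty}$, which gives the stated dependence of $C$.

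The main obstacle is the stopping-time construction itself: one must guarantee the existence of a scale $\rho_x$ at which the $\q$-density of $E$ in $B_{\rho_x}(x)$ crosses the level $\varepsilon$ — available at small scales by the Lebesgue differentiation theorem for the doubling measure $\q$ (density $1$ $\q$-a.e.\ on $E$), and ruled out at scale $R$ by hypothesis (i) after passing to a dyadic parent — and then to ensure that the next-larger comparable ball has density $\ge \varepsilon$ so that (ii) applies. Handling the intersection with $B$ (boundary balls) and keeping all constants dependent only on $n$ and $[\q]_{\mathbf{A}_\infty}$ requires using only the doubling property of $\q$, not its full $\mathbf{A}_\infty$ profile; I expect this to go through by a routine but slightly delicate dyadic Calderón–Zygmund decomposition relative to $\q$, exactly as in \cite{MP}.
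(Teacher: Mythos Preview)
The paper does not supply a proof of this lemma at all: it is stated with a reference to \cite{MP} as a known weighted Calder\'on--Zygmund--Krylov--Safonov decomposition, and then used as a black box in Lemma~\ref{5hhvitali2}. So there is nothing in the paper to compare your argument against; your Vitali/stopping-time outline is precisely the standard route that \cite{MP} follows.

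Your sketch does contain a genuine bookkeeping error, however. You set up $\widehat{B}_i$ so that $\q(E\cap\widehat{B}_i)>\varepsilon\,\q(\widehat{B}_i)$ (this is the ball to which hypothesis~(ii) is applied), but then in the displayed chain you invoke the \emph{opposite} inequality $\q(E\cap\widehat{B}_i)\le\varepsilon\,\q(\widehat{B}_i)$. The upper bound has to come from the \emph{parent} scale $B_i$ at which the density first drops below $\varepsilon$, not from $\widehat{B}_i$. With $c_0\le 1/5$ (so that $5\widehat{B}_i\subset B_i$) and the $\widehat{B}_i$ disjoint by Vitali, the correct chain is
\[
\q(E)\ \le\ \sum_i \q(E\cap 5\widehat{B}_i)\ \le\ \sum_i \q(E\cap B_i)\ \le\ \varepsilon\sum_i \q(B_i)\ \le\ C\varepsilon\sum_i \q(\widehat{B}_i\cap B)\ \le\ C\varepsilon\,\q(F),
\]
where the penultimate step uses doubling for $\q(B_i)\le C\q(\widehat{B}_i)$ together with $\q(\widehat{B}_i)\le C\q(\widehat{B}_i\cap B)$. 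For the latter you need more than bare doubling, contrary to your closing remark: it uses the (equivalent) lower $\mathbf{A}_\infty$ estimate, or else an inscribed-ball argument, to pass from $|\widehat{B}_i\cap B|\ge c_n|\widehat{B}_i|$ to the corresponding $\q$-inequality. Finally, the balls with $\rho_i$ comparable to $R$ (so that the parent would exceed radius $R$) must be treated separately via hypothesis~(i) and doubling, which gives $\q(E\cap 5\widehat{B}_i)\le\q(E)<\varepsilon\,\q(B)\le C\varepsilon\,\q(\widehat{B}_i)$.
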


The next result is a variation of Lemma \ref{phu}.

\begin{lemma}\label{5hhvitali2} 
Let $\q$ be an $\mathbf{A}_\infty$-weight.
Let $E\subset F$ be measurable and $\vae \in (0,1)$ satisfy the following property: For all $x\in \R^n$ and $R \in (0,\vc)$, one has 
\begin{equation}\label{thn3}
\q(E\cap B_R(x))\geq \varepsilon \, \q(B_R(x)) 
\quad \mbox{implies} \quad
B_R(x) \subset F.
\end{equation}
Then there exists a $C = C(n,[\q]_{\mathbf{A}_\infty})$ such that $\q(E)\leq C \varepsilon \, \q(F)$.    
\end{lemma}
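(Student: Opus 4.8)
The plan is to reduce Lemma \ref{5hhvitali2} to Lemma \ref{phu} by an exhaustion argument that converts the global covering statement into a localized one on a large ball. First I would observe that it suffices to prove $\q(E \cap B) \leq C\varepsilon\,\q(F)$ for every ball $B$ with a constant $C$ independent of $B$; letting $B$ exhaust $\R^n$ then gives the claim by monotone convergence, since $\q \in \Lo$. So fix a ball $B = B_{R_0}(x_0)$. The obstacle is that Lemma \ref{phu} requires hypothesis (i), namely $\q(E \cap B) < \varepsilon\,\q(B)$, which need not hold for an arbitrary $B$; and it requires the radii in hypothesis (ii) to be bounded by the radius of $B$, whereas \eqref{thn3} allows all $R \in (0,\infty)$.

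To handle hypothesis (i), I would enlarge $B$: consider instead $B^* = B_{KR_0}(x_0)$ for a large dilation factor $K$ to be chosen. By the $\mathbf{A}_\infty$ doubling-type property applied to $E \cap B \subset B^*$ (or more simply by the definition of $\mathbf{A}_\infty$ with $E$ replaced by $B$ inside $B^*$), one gets $\q(B) \leq C_0 K^{-n\nu}\,\q(B^*)$ for a constant $C_0 = C_0(n,[\q]_{\mathbf{A}_\infty})$ — wait, more carefully: $\q(B) \le C \big(|B|/|B^*|\big)^\nu \q(B^*) = C K^{-n\nu} \q(B^*)$. Hence if $K$ is chosen so large that $C K^{-n\nu} < \varepsilon$ (a choice depending only on $n$ and $[\q]_{\mathbf{A}_\infty}$, crucially \emph{not} on $B$), then $\q(E \cap B^*) \le \q(B^*)$ trivially is not enough; rather I want $\q(E\cap B^*)<\varepsilon\q(B^*)$ which still fails in general. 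The correct fix is different: I should not try to verify (i) for $E$ itself but restrict the set. Actually the cleanest route: apply Lemma \ref{phu} with the ball $B^*$, the set $F$ unchanged, but the set $E$ replaced by $E \cap B$. Then hypothesis (i) reads $\q(E \cap B) < \varepsilon\, \q(B^*)$, and since $\q(E\cap B) \le \q(B) \le CK^{-n\nu}\q(B^*)$, choosing $K$ with $CK^{-n\nu} < \varepsilon$ makes (i) hold.

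For hypothesis (ii) of Lemma \ref{phu} applied to $(E \cap B, F, B^*)$: suppose $x \in B^*$, $\rho \in (0, KR_0]$ and $\q((E\cap B)\cap B_\rho(x)) \ge \varepsilon\,\q(B_\rho(x))$. Then a fortiori $\q(E \cap B_\rho(x)) \ge \varepsilon\,\q(B_\rho(x))$, so \eqref{thn3} with this $x$ and $R = \rho$ (permissible since \eqref{thn3} allows all radii) gives $B_\rho(x) \subset F$, hence $B_\rho(x) \cap B^* \subset F$. Thus both hypotheses of Lemma \ref{phu} are met, and it yields $\q(E \cap B) \le C_1 \varepsilon\, \q(F)$ with $C_1 = C_1(n,[\q]_{\mathbf{A}_\infty})$. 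Finally I would let $R_0 \to \infty$ with $x_0$ fixed: $E \cap B_{R_0}(x_0) \uparrow E$, so by continuity from below of the measure $\q\,dx$, $\q(E) = \lim_{R_0 \to \infty} \q(E \cap B_{R_0}(x_0)) \le C_1\varepsilon\,\q(F)$, which is the desired conclusion. The main subtlety — the only place any real care is needed — is ensuring the dilation factor $K$ depends only on $n$ and $[\q]_{\mathbf{A}_\infty}$ and not on the ball, which is exactly what the scale-invariant form of the $\mathbf{A}_\infty$ condition provides; everything else is bookkeeping.
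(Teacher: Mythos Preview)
Your proposal is correct and follows essentially the same strategy as the paper: reduce to Lemma~\ref{phu} by working inside a sufficiently large ball and then exhaust $\R^n$. The only difference is cosmetic --- the paper first observes (without loss of generality) that $\q(E)\vee\q(F)<\infty$, then simply picks $R$ so large that $\q(E)<\varepsilon\,\q(B_R(x_0))$ and applies Lemma~\ref{phu} directly to $(E\cap B_R(x_0),\,F\cap B_R(x_0),\,B_R(x_0))$; this sidesteps your dilation step and the explicit use of the $\mathbf{A}_\infty$ comparison $\q(B)\le CK^{-n\nu}\q(B^*)$.
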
 

\begin{proof}
Without loss of generality, we may assume that $\q(E) \vee \q(F)<\vc.$ 
Let $x_0 \in \R^n$ and $R$ be sufficiently large such that $\q(E)<\vae \, \q(B_R(x_0))$. Set $S=E \cap B_R(x_0)$ and $T=F \cap B_R(x_0).$ 
The claim follows directly from Lemma \ref{phu} with $S$, $T$, $B_R(x_0)$ and $\vae.$  

Indeed, we have $\q(S)\leq \q(E)<\vae \, \q(B_R(x_0)).$ Assume that $x\in B_R(x_0)$ and $\rho\in (0,R]$ satisfy  
$$\q(S\cap B_\rho(x))\geq \varepsilon \, \q(B_\rho(x)).$$
Obviously we also have
$$\q(E \cap B_\rho(x))\geq \varepsilon \, \q(B_\rho(x)).$$
Therefore \eqref{thn3} implies $B_\rho(x)\subset F,$ from which it follows that $B_\rho(x) \cap B_R(x_0) \subset F \cap B_R(x_0)=T.$

Next Lemma \ref{phu} asserts that there exists a $C = C(n,[\q]_{\mathbf{A}_\infty})$ such that $\q(E \cap B_R(x_0))\leq C \varepsilon \, \q(F \cap B_R(x_0))$.  
Now we let $R$ tend to infinity to complete the proof.
\end{proof}

Recall the maximal function defined by
\[
{\bf M}_\beta(f)(x)
=\sup_{\rho>0} \rho^\beta \, \tb_{B_\rho(x)}|f(y)|dy
\]
for all $x\in\mathbb{R}^{n}$, $f \in L^1_{\mathrm{loc}}(\R^n)$ and $\beta \in [0,n]$.
The case $\beta = 0$ corresponds to the usual Hardy-Littlewood maximal function ${\bf M} = {\bf M}_0$.

We now turn to the aforementioned good-$\lambda$-type estimate.

\begin{proposition}\label{5hh23101312}   
Let $\q\in {\bf A}_\infty$, $\epsilon > 0$ and $q \in (1,q_0)$.
Let $\beta \in (1,\infty)$ be such that $\frac{np}{n-p}<\beta'<\frac{n}{n(2-p)-1}$ and $f\in L^{\beta}(\R^n)$.   
Then there exist constants 
$$
C = C(n,[\q]_{{\bf A}_\infty}),
\quad
\Lambda_0=\Lambda_0(n,p,q)>3^{n/q}
\quad \mbox{and} \quad 
\delta=\delta(n,p,q,\varepsilon,[\q]_{{\bf A}_\infty})\in (0,1),
$$
such that 
\begin{eqnarray*}
&& \q\Bigg[\left\{x\in \R^n: \Big( {\bf M}\big(|D u|^q\big)(x)\Big)^{1/q}>\Lambda_0\lambda,
\,\, \Big(\mathbf{M}_\beta \big(|f|^{\beta}\big)(x)\Big)^{\frac{1}{(p-1)\beta}} \leq \delta^{1/(p-1)} \lambda \right\}\Bigg] 
\\
&\leq& C\varepsilon \, \q \Big(\big\{x\in \R^n: ({\bf M}(|D u|^q)(x))^{1/q}> \lambda\big\}\Big) 
\end{eqnarray*}
for all $\lambda>0$.
\end{proposition}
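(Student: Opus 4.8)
The plan is to run the standard good-$\lambda$ machinery for Calderón–Zygmund estimates, using Lemma \ref{5hhvitali2} as the covering device and Proposition \ref{inter} as the comparison step. Write $E_{\Lambda_0\lambda} = \{x : ({\bf M}(|Du|^q)(x))^{1/q} > \Lambda_0\lambda\} \cap \{x : ({\bf M}_\beta(|f|^\beta)(x))^{1/((p-1)\beta)} \le \delta^{1/(p-1)}\lambda\}$ and $F_\lambda = \{x : ({\bf M}(|Du|^q)(x))^{1/q} > \lambda\}$. We want $\q(E_{\Lambda_0\lambda}) \le C\varepsilon\,\q(F_\lambda)$. By Lemma \ref{5hhvitali2} it suffices to verify the two hypotheses of that lemma with $E = E_{\Lambda_0\lambda}$, $F = F_\lambda$ and the $\varepsilon$ there replaced by the current $\varepsilon$. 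The first hypothesis ($\q(E_{\Lambda_0\lambda}) < \varepsilon\,\q(B_R)$ for suitably large $R$) is automatic once we know $\q(E_{\Lambda_0\lambda}) < \infty$, which follows because $Du \in L^q$ locally (from the comparison results), so $E_{\Lambda_0\lambda}$ has finite measure, hence finite $\q$-measure. The substance is the second (self-improvement) hypothesis: if $\q(E_{\Lambda_0\lambda} \cap B_R(x)) \ge \varepsilon\,\q(B_R(x))$, then $B_R(x) \subset F_\lambda$.

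To prove this implication I would argue by contrapositive: suppose $B_R(x) \not\subset F_\lambda$, i.e.\ there is a point $x_1 \in B_R(x)$ with $({\bf M}(|Du|^q)(x_1))^{1/q} \le \lambda$; I must then show $\q(E_{\Lambda_0\lambda} \cap B_R(x)) < \varepsilon\,\q(B_R(x))$. If additionally $E_{\Lambda_0\lambda} \cap B_R(x) = \emptyset$ there is nothing to prove, so pick also $x_2 \in B_R(x)$ with $({\bf M}_\beta(|f|^\beta)(x_2))^{1/((p-1)\beta)} \le \delta^{1/(p-1)}\lambda$. The key localization observation is that for $y \in B_R(x)$ one has ${\bf M}(|Du|^q)(y) \le \max\{{\bf M}(\1_{B_{4R}(x)}|Du|^q)(y),\, 3^n\lambda^q\}$ — splitting the supremum over balls centered at $y$ into those of radius $\le 2R$ (contained in $B_{4R}(x)$) and those of radius $> 2R$ (controlled by the value at $x_1$, since $\Lambda_0 > 3^{n/q}$ forces the relevant set to only see the truncated maximal function). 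Hence on $E_{\Lambda_0\lambda} \cap B_R(x)$ we have $({\bf M}(\1_{B_{4R}(x)}|Du|^q)(y))^{1/q} > \Lambda_0\lambda$. Now apply Proposition \ref{inter} on the ball $B = B_{4R}(x)$ (using the bound on $f$ at $x_2$ to control the $L^\beta$-average of $f$ on $B_{4R}(x)$ by a constant times $\delta^{1/(p-1)}\lambda$, and the bound at $x_1$ to control the $L^q$-average of $Du$ on $B_{4R}(x)$ by a constant times $\lambda$): this yields a $p$-harmonic $v$ on $\tfrac12 B$ with $\|Dv\|_{L^\infty(\frac14 B)} \le C_1\lambda$ and $(\tb_{\frac12 B}|Du - Dv|^q)^{1/q} \le \varepsilon C_2\lambda$ (after rescaling $\varepsilon$ by a fixed constant and absorbing $\delta^{1/(p-1)}$). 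Choosing $\Lambda_0$ larger than $3C_1$ (plus the $3^{n/q}$ constraint) we get a weak-type $(1,1)$ bound: the part of $\tfrac14 B$ where $({\bf M}(\1_{B}|Du|^q))^{1/q} > \Lambda_0\lambda$ is contained in the set where ${\bf M}(\1_B|Du - Dv|^q)^{1/q} > (\Lambda_0/C_1')\lambda$ for a suitable constant, whose Lebesgue measure is $\lesssim (\varepsilon C_2/\Lambda_0)^q |B|$ by the weak-$(1,1)$ bound for ${\bf M}$.

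From the Lebesgue-measure bound $|E_{\Lambda_0\lambda} \cap B_R(x)| \le c\,\varepsilon^q\,|B_R(x)|$ (with $c$ depending only on $n,p,q$ after the dust settles), the $\mathbf A_\infty$-property of $\q$ converts this into $\q(E_{\Lambda_0\lambda} \cap B_R(x)) \le C[\q]_{\mathbf A_\infty}\big(c\,\varepsilon^q\big)^\nu \q(B_R(x))$. To make this $< \varepsilon\,\q(B_R(x))$ we only need $\varepsilon$ small relative to $[\q]_{\mathbf A_\infty}$; since we are free to shrink $\varepsilon$ and then correspondingly shrink $\delta = \delta(n,p,q,\varepsilon,[\q]_{\mathbf A_\infty})$ at the point where Proposition \ref{inter} is invoked, this closes the argument. (A cleaner bookkeeping route: first prove the Lebesgue-measure version of the good-$\lambda$ inequality with constant $c\,\varepsilon^q$, then feed it into Lemma \ref{phu}/Lemma \ref{5hhvitali2} with the substitution $\varepsilon \rightsquigarrow c\,\varepsilon^q$ and rename.) The main obstacle I anticipate is the careful verification of the localization inequality for ${\bf M}$ on $B_R(x)$ — in particular pinning down why the constraint $\Lambda_0 > 3^{n/q}$ is exactly what is needed so that only balls $B_{4R}(x)$ (rather than arbitrarily large ones) are relevant — together with tracking how the two auxiliary points $x_1, x_2$ let one replace the averages of $|Du|^q$ and $|f|^\beta$ over $B_{4R}(x)$ by multiples of $\lambda$; everything after that is a routine application of Proposition \ref{inter}, the weak-$(1,1)$ estimate for the Hardy–Littlewood maximal function, and the $\mathbf A_\infty$ condition.
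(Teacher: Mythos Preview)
Your approach is exactly the paper's: apply Lemma~\ref{5hhvitali2} to $E_{\Lambda_0\lambda}\subset F_\lambda$, verify the implication by contraposition using auxiliary points $x_1,x_2$, localize the maximal function, invoke Proposition~\ref{inter}, and finish with the weak-$(1,1)$ bound for ${\bf M}$ together with the $\mathbf{A}_\infty$ property. (One cosmetic point: Lemma~\ref{5hhvitali2} as stated has only the single implication hypothesis; the finiteness you discuss is handled inside its proof.)

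There is, however, a concrete radius mismatch that prevents the argument from closing as written. You localize to $\1_{B_{4R}(x)}|Du|^q$ but then apply Proposition~\ref{inter} on $B=B_{4R}(x)$, which produces a $p$-harmonic $v$ defined only on $\tfrac12 B=B_{2R}(x)$ with $\|Dv\|_{L^\infty}$ controlled only on $\tfrac14 B=B_R(x)$. Consequently the splitting
\[
\1_{B_{4R}(x)}|Du|^q \le C\big(\1_{B_{4R}(x)}|Du-Dv|^q+\1_{B_{4R}(x)}|Dv|^q\big)
\]
is not even well defined (let alone bounded) outside $B_{2R}(x)$, so the inclusion into $\{{\bf M}(\1_B|Du-Dv|^q)^{1/q}>\Lambda_0\lambda/C_1'\}$ does not follow. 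The paper avoids this by matching the scales: it localizes to $B_{2r}(x)$ (splitting the supremum at $\rho=r$) and applies Proposition~\ref{inter} on $B_{8r}(x)$, so that $\tfrac14 B=B_{2r}(x)$ coincides with the localization ball and both $\|Dv\|_{L^\infty(B_{2r}(x))}$ and $\int_{B_{2r}(x)}|Du-Dv|^q$ are controlled. Your argument is repaired by the same adjustment: either localize to $B_{2R}(x)$ and call Proposition~\ref{inter} on $B_{8R}(x)$, or keep the localization to $B_{4R}(x)$ and call Proposition~\ref{inter} on $B_{16R}(x)$. After this fix the remainder of your outline goes through verbatim and matches the paper's proof.
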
 		

\begin{proof} 
Set 
$$E_{\lambda,\delta}=\left\{y\in \R^n: \Big( {\bf M}\big(|D u|^q\big)(y)\Big)^{1/q}>\Lambda_0\lambda, \,\,
\Big(\mathbf{M}_\beta \big(|f|^{\beta}\big)(y)\Big)^{\frac{1}{(p-1)\beta}} \leq \delta^{1/(p-1)} \lambda \right\}$$ 
and 
$$F_\lambda=\big\{y\in \R^n: \Big({\bf M}(|D u|^q)(y)\Big)^{1/q}> \lambda\big\}$$ for each $\delta\in (0,1)$ and $\lambda>0$. 
Here $\Lambda_0 = \Lambda_0(n,p,q)$ is to be chosen later.  

We will use Lemma \ref{5hhvitali2} for $E_{\lambda,\delta}$ and $F_\lambda$. 
That is, we will verify that 
\[
\q(E_{\lambda,\delta} \cap B_r(x))\geq \varepsilon \q(B_r(x)) \quad \Longrightarrow \quad B_r(x) \subset F_\lambda
\]
for all $x\in \R^n$, $r\in (0,\vc)$ and $\lambda>0$, provided that $\delta$ is sufficiently small.

Indeed, let $x\in \R^n$, $r\in (0,\vc)$ and $\lambda>0$.
To avoid triviality, we consider $E_{\lambda,\delta}\cap B_r(x)\not = \emptyset.$ 
By contraposition, assume that $B_r(x) \cap F^c_\lambda\not= \emptyset.$ Then there exist $x_1,x_2\in B_r(x)$ such that 
\begin{equation} \label{x1x2}
\Big({\bf M}(|D u|^q)(x_1)\Big)^{1/q}\leq \lambda 
\quad \mbox{and} \quad 
\Big(\mathbf{M}_\beta \big(|f|^{\beta}\big)(x_2)\Big) ^{\frac{1}{(p-1)\beta}}\le  \delta^{1/(p-1)} \lambda.
\end{equation}
We aim to show that
\[
\q\big(E_{\lambda,\delta} \cap B_r(x)\big)< \varepsilon \q(B_r(x)). 
\]

First note that 
\begin{equation} \label{maximal truncate}
\Big({\bf M}(|D u|^q)(y)\Big)^{1/q} \leq \max\left\{\Big({\bf M}\left(\1_{B_{2r}(x)}|D u|^{q}\right)(y)\Big)^{\frac{1}{q}},3^{n/q}\lambda\right\}
\end{equation}
for all $y\in B_r(x)$.
Indeed, if $\rho\leq r$ then
\[
\tb_{B_\rho(y)}|Du|^qdx=\tb_{B_\rho(y)} \1_{B_{2r}(x)} |Du|^qdx\leq {\bf M}\left(\1_{B_{2r}(x)}|D u|^{q}\right)(y).
\]
Otherwise $B_\rho(y)\subset B_{2r+\rho}(x_1)$ and we have
\[
\tb_{B_\rho(y)}|Du|^qdx\leq \frac{1}{|B_\rho(y)|}\tp_{B_{3\rho}(x_1)}|Du|^qdx=3^n\tb_{B_{3\rho}(x_1)}{\bf M}(|D u|^q)(x_1) \leq 3^n \lambda^q.
\]

It follows from \eqref{maximal truncate} that 
\[
E_{\lambda,\delta} \cap B_r(x)=\left\{y \in \R^n: \Big({\bf M}\left(\1_{B_{2r}(x)}|D u|^{q}\right)(y)\Big)^{\frac{1}{q}}>\Lambda_0\lambda, \Big(\mathbf{M}_\beta\big(|f|^{\beta}\big)(y)\Big)^{\frac{1}{(p-1)\beta}}\leq \delta^{1/(p-1)} \lambda  \right\} \cap B_r(x)
\]
for all $\lambda>0$ and $\Lambda_0\geq 3^{n/q}$.

Applying  Proposition \ref{inter} to  $u \in W_{0}^{1,p}(\R^n), f, B=B_{8r}(x)$ and $\eta \in (0,1)$, there exist constants $\delta = \delta(n,p,q,\varepsilon,[\q]_{\mathbf{A}_\infty})\in (0,1)$, $C_0 = C_0(n,p,q) > 0$ and a $p$-harmonic map $v \in W^{1,p}(B_{4r}(x))$ such that
\begin{align*}
\|D v\|_{L^\infty(B_{2r}(x))}
\leq \frac{C_0\eta}{\delta^{1/(p-1)}} \left[r\left(\tb_{B_{8r}(x)} |f|^{\beta} dy\right)^{1/{\beta}}\right]^{1/(p-1)}+C_0(1+\eta) \left(\tb_{B_{8r}(x)}|D u|^{q}dy\right)^{1/q}
\end{align*}
and
\begin{align*}
\left(\tb_{B_{4r}(x)}|D u-D v|^qdx\right)^{\frac{1}{q}}
\leq \frac{\eta}{\delta^{1/(p-1)}} \left[r\left(\tb_{B_{8r}(x)} |f|^{\beta} dx\right)^{1/{\beta}}\right]^{1/(p-1)}+  \eta \left(\tb_{B_{8r}(x)}|D u|^qdx\right)^{1/q}.
\end{align*}

Using \eqref{x1x2} we deduce that
\begin{eqnarray}
\|D v\|_{L^\infty(B_{2r}(x))} 
&\leq&   \frac{C_0\eta}{\delta^{1/(p-1)}}\Big(\mathbf{M}_{\beta}\big(|f|^{\beta}\big)(x_2)\Big)^{\frac{1}{(p-1)\beta}} +C_0(1+\eta)\left[{\bf M}(|D u|^q)(x_1)\right]^{1/q}  
\nonumber
\\
&\leq& C_0(1+\eta)\lambda \leq 2C_0\lambda
\label{DvLinf}
\end{eqnarray}
and 
\begin{eqnarray}
\left(\tb_{B_{4r}(x)}|D u-D v|^qdx\right)^{\frac{1}{q}}
&\leq& \frac{\eta}{\delta^{1/(p-1)}}\left[R\left(\tb_{B_{8r}(x)} |f|^{\beta} dx\right)^{1/{\beta}}\right]^{1/(p-1)}+  \eta \left(\tb_{B_{8r}(x)}|D u|^qdx\right)^{1/q}
\nonumber
\\
&\leq& \frac{\eta}{\delta^{1/(p-1)}} \Big(\mathbf{M}_{\beta}\big(|f|^{\beta}\big)(x_2)\Big)^{\frac{1}{(p-1)\beta}} +\eta \left[{\bf M}(|D u|^q)(x_1)\right]^{1/q}          
\nonumber
\\
&\leq& \eta \lambda.
\label{Dvdiff}
\end{eqnarray}                                                            

Clearly
\begin{align*}
\left[	{\bf M}\Big(\Big|\sum_{j=1}^{3}f_j\Big|^q\Big)\right]^{1/q}\leq 3\sum_{j=1}^{3}	\left[	{\bf M}\Big(\big|f_j\big|^q\Big)\right]^{1/q}.
\end{align*}       
Hence
\begin{eqnarray}                  
|E\cap B_r(x)|
&\leq&   \big|\{y\in \R^n: {\bf M}\left(\1_{B_{2r}(x)}|D (u-v)|^q(y)\right)^{\frac{1}{q}}>\Lambda_0\lambda/9\}\cap B_r(x) \big|
\nonumber
\\
&& {} + \big| \{y\in \R^n: {\bf M}\left(\1_{B_{2r}(x)}|Dv|^q(y)\right)^{\frac{1}{q}}>\Lambda_0\lambda/9\}\cap B_r(x) \big|.   
\label{es18}      
\end{eqnarray}

In view of \eqref{DvLinf} there holds 
\begin{align*}
\big|y \in \R^n: \{{\bf M}\left(\1_{B_{2r}(x)}|Dv|^q(y)\right)^{\frac{1}{q}}>\Lambda_0\lambda/9\}\cap B_r(x)\big|=0,
\end{align*}
provided that
$\Lambda_0\geq \max\{3^{n/q},30C_0\}$.

Combining \eqref{Dvdiff} and \eqref{es18} yields
\begin{align*}
|E\cap B_r(x)|
&\leq   \big|\Big\{y\in \R^n:\left[{\bf M}\left(\1_{B_{2r}(x)}|D (u-v)|^q(y)\right)\right]^{\frac{1}{q}}>\Lambda_0\lambda/9\Big\}\cap B_r(x)\big|\\
& \quad \leq \frac{C}{\lambda^q}\int_{B_{2r}(x)}  |D (u-v)|^q dx
\leq  C \eta^q r^n,
\end{align*} 
where we used the fact that ${\bf M}$ is of weak type $(1,1)$ in the second step.

Thus 
\begin{align*}
\q(E\cap B_r(x))
\leq c\left(\frac{|E\cap B_r(x) |}{|B_r(x)|}\right)^\nu \q(B_r(x))
\leq  c(C\eta^q)^{\nu} \q(B_r(x))
< \varepsilon \q(B_r(x)),
\end{align*} 
where we chose $\eta$ small enough such that $c(C\eta^q)^{\nu}<\epsilon.$     

This completes our proof.                 
\end{proof}

\section{Global weighted gradient estimates} \label{main proof}

With the knowledge from the previous sections, we are now ready to tackle the main theorem.

\begin{proof}[{\bf Proof of Theorem \ref{101120143-p}}] 
By Theorem \ref{5hh23101312}, for all $\varepsilon>0$ and $q \in (1,q_0)$, where $q_0$ is defined in \eqref{4.2} there exist constants $C = C(n,[\q]_{{\bf A}_\infty})$, $\delta=\delta(n,p,q,\varepsilon,[\q]_{{\bf A}_\infty})\in (0,1)$ and 
$\Lambda_0=\Lambda_0(n,p,q)>3^{n/q}$ such that 
\begin{align*}
& \q\left(\Big\{x\in\R^n:({\bf M}(|D u|^q)(x))^{1/q}>\Lambda_0\lambda, \Big(\mathbf{M}_{\beta}\big(|f|^{\beta}\big)(x)\Big)^{\frac{1}{(p-1)\beta}}\le \delta^{1/(p-1)} \lambda \Big\}\right)\\
& \qquad \qquad \leq C\varepsilon \q\left(\left\{x\in\R^n: \Big({\bf M}(|D u|^{q})(x)\Big)^{1/q}> \lambda \right\}\right)
\end{align*}
for all $\lambda>0$.

By hypothesis $\Phi$ is invertible and $\Phi^{-1}:[0,\infty)\rightarrow [0,\infty)$.
Therefore
\begin{eqnarray*}
\q\left(\left\{x\in\R^n:\left[{\bf M}(|D u|^q)(x)\right]^{1/q}>\Phi^{-1}(t)\right\}\right)
&\le& \q\left(\left\{x\in\R^n: \Big(\mathbf{M}_{\beta}\big(|f|^{\beta}\big)(x)\Big)^{\frac{1}{(p-1)\beta}}> \frac{\delta^{1/(p-1)}}{\Lambda_0} \Phi^{-1}(t) \right\}\right)
\\
&& {} + C\varepsilon \q\left(\left\{x\in \R^n: \Big({\bf M}(|D u|^q)(x)\Big)^{1/q}> \frac{\Phi^{-1}(t)}{\Lambda_0}\right\}\right)
\end{eqnarray*}
for all $t>0$. 
This in turn implies
\begin{eqnarray*}
&&\int_{0}^{T}\q \left(\left\{x\in\R^n:	\Phi \left[\Big({\bf M}(|D u|^q)(x)\Big)^{\frac{1}{q}}\right]>t\right\}\right) dt
\\
&\leq& C\varepsilon \int_{0}^{T}\q \left(\left\{x\in\R^n:	\Phi\left[\Lambda_0\Big({\bf M}(|D u|^{q})(x)\Big)^{\frac{1}{q}}\right]>t\right\}\right) dt \\
&& {} +   \int_{0}^{T} \q\left(\left\{x\in\R^n: \Phi\left[\frac{\Lambda_0}{\delta^{1/(p-1)}}\Big(\mathbf{M}_{\beta}\big(|f|^{\beta}\big)(x)\Big)^{\frac{1}{(p-1)\beta}}\right]> t  \right\}\right)dt
\\
&\leq& C\varepsilon \int_{0}^{T}\q\left(\left\{x\in\R^n:	 H_1\Phi\left[\Big({\bf M}(|D u|^{q})(x)\Big)^{\frac{1}{q}}\right]>t\right\}\right) dt \\
&& {} + \int_{0}^{T} \q\left(\left\{x\in\R^n: H_2\Phi\left[\Big(\mathbf{M}_{\beta}\big(|f|^{\beta}\big)(x)\Big)^{\frac{1}{(p-1)\beta}}\right]> t \right \}\right)dt,
\end{eqnarray*}
where we used the fact that $\Phi(2t)\leq c\, \Phi(t)$ and $\Phi$ is increasing in the second step.
Here $T > 0$, $H_1=c^{\lceil\log_{2}(\Lambda_0)\rceil}$ and  $H_2=c^{\left\lceil\log_{2}\left(\frac{\Lambda_0}{\delta^{1/(p-1)}}\right)\right\rceil}$, in which $\lceil \cdot \rceil$ denotes the ceiling function.  

Using a change of variables we arrive at 
\begin{eqnarray*}
&&\int_{0}^{T}\q\left(\left\{x\in\R^n:	\Phi\left[\Big({\bf M}(|D u|^{q})(x)\Big)^{\frac{1}{q}}\right]>t\right\}\right) dt
\\
&\leq& H_1 C\varepsilon \int_{0}^{\frac{T}{H_1}}\q \left(\left\{x\in\R^n:	 \Phi\left[\Big({\bf M}(|D u|^{q})(x)\Big)^{\frac{1}{q}}\right]>t\right\}\right) dt \\
&& {} + H_2 \int_{0}^{\frac{T}{H_2}} \q\left(\left\{x\in\R^n: \Phi\left[\Big(\mathbf{M}_{\beta}\big(|f|^{\beta}\big)(x)\Big)^{\frac{1}{(p-1)\beta}}\right]> t  \right\}\right)dt.
\end{eqnarray*}

Now we choose $\varepsilon=\frac{1}{2H_1 C}$ so that the first integral on the right is absorbed by the left-hand term, which yields
\begin{eqnarray*}
&&\int_{0}^{T}\q \left(\left\{x\in\R^n:	\Phi\left[({\bf M}(|D u|^{q})(x))^{\frac{1}{q}}\right]>t\right\}\right) dt
\\
&\leq& 	2 H_2 \int_{0}^{\frac{T}{H_2}} \q\left(\left\{x\in\R^n: \Phi\left[\Big(\mathbf{M}_{\beta}\big(|f|^{\beta}\big)(x)\Big)^{\frac{1}{(p-1)\beta}}\right]> t  \right\}\right)dt.
\end{eqnarray*}
Recall that
\begin{align*}
\int_{\R^n}\Phi(|f|) \q dx= \int_{0}^{\infty}\q(\{x\in\R^n:\Phi(|f(x)|)>t\}) dt.
\end{align*}	
Thus by letting $T\rightarrow \infty$ in the above inequality we arrive at 
\begin{equation*}
\int_{\R^n}\Phi\left[\Big({\bf M}\left(|D u|^q\right)\Big)^{\frac{1}{q}}\right] \q dx\leq 2H_2 \int_{\R^n}\Phi\left[\Big(\mathbf{M}_{\beta}\big(|f|^{\beta}\big)\Big)^{\frac{1}{(p-1)\beta}}\right] \q \, dx
\end{equation*}
as required.
\end{proof}   


\end{document}